\newtheorem{theorem}{Theorem}[section]
\newtheorem{lemma}[theorem]{Lemma}
\newtheorem{corollary}[theorem]{Corollary}
\newtheorem{proposition}[theorem]{Proposition}
\theoremstyle{definition}
\newtheorem{definition}[theorem]{Definition}
\newtheorem{example}[theorem]{Example}
\theoremstyle{remark}
\newtheorem{remark}[theorem]{Remark}
\numberwithin{equation}{section}
\begin{document}

\title{Regular subspaces of skew product diffusions}

\author{Liping Li}
\address{School of Mathematical Sciences, Fudan University, 220 Handan Road, Shanghai, China.  200433}
\email{lipingli10@fudan.edu.cn}
\thanks{Research supported in part by NSFC Grant 11271240.}

\author{Jiangang Ying}
\address{School of Mathematical Sciences, Fudan University, 220 Handan Road, Shanghai, China.  200433}
\email{jgying@fudan.edu.cn}

\subjclass[2000]{Primary 31C25; Secondary 60J55, 60J60}



\keywords{Regular subspaces, Dirichlet forms, Skew product, Rotation invariance.}

\begin{abstract}
Roughly speaking, the regular subspace of a Dirichlet form is also a regular Dirichlet form on the same state space. It inherits the same form of original Dirichlet form but possesses a smaller domain.
What we are concerned in this paper are the regular subspaces of associated Dirichlet forms of skew product diffusions.
A skew product diffusion $X$ is a symmetric Markov process on the product state space $E_1\times E_2$ and expressed as
\[
	X_t=(X^1_t,X^2_{A_t}),\quad t\geq 0,
\]
where $X^i$ is a symmetric diffusion on $E_i$ for $i=1,2$, and $A$ is a positive continuous additive functional of $X^1$. 
 One of our main results indicates that any skew product type regular subspace of $X$, say
 \[
 	Y_t=(Y^1_t,Y^2_{\tilde{A}_t}),\quad t\geq 0,
 \] 
can be characterized as follows: the associated smooth measure of $\tilde{A}$ is equal to that of $A$, and $Y^i$ corresponds to a regular subspace of $X^i$ for $i=1,2$. Furthermore, we shall make some discussions on rotationally invariant diffusions on $\mathbf{R}^d\setminus \{0\}$, which are special skew product diffusions on $(0,\infty)\times S^{d-1}$. Our main purpose is to extend a regular subspace of rotationally invariant diffusion on $\mathbf{R}^d\setminus \{0\}$ to a new regular Dirichlet form on $\mathbf{R}^d$. More precisely, fix a regular Dirichlet form $(\mathcal{E,F})$ on the state space $\mathbf{R}^d$. Its part Dirichlet form on $\mathbf{R}^d\setminus \{0\}$ is denoted by  $(\mathcal{E}^0,\mathcal{F}^0)$. Let $(\tilde{\mathcal{E}}^0,\tilde{\mathcal{F}}^0)$ be a regular subspace of $(\mathcal{E}^0,\mathcal{F}^0)$. We want to find a regular subspace $(\tilde{\mathcal{E}},\tilde{\mathcal{F}})$ of $(\mathcal{E,F})$ such that the part Dirichlet form of $(\tilde{\mathcal{E}},\tilde{\mathcal{F}})$ on $\mathbf{R}^d\setminus \{0\}$ is exactly $(\tilde{\mathcal{E}}^0,\tilde{\mathcal{F}}^0)$. If $(\tilde{\mathcal{E}},\tilde{\mathcal{F}})$ exists, we call it a regular extension of $(\tilde{\mathcal{E}}^0,\tilde{\mathcal{F}}^0)$. We shall prove that under a mild assumption, any rotationally invariant type regular subspace of $(\mathcal{E}^0,\mathcal{F}^0)$ has a unique regular extension. 
\end{abstract}

\maketitle



\section{Introduction}
\label{intro}

The regular subspaces of a Dirichlet form were first raised in  \cite{FMG} and then concerned in \cite{XPJ}, \cite{FL}, \cite{LY2} and \cite{LY} by the second author and his co-authors. To introduce this conception, let $E$ be a locally compact {separable metric space} and $m$ a  Radon measure fully supported on $E$. A non-negative definite symmetric bilinear form $\mathcal{E}$ with domain $\mathcal{F}$ densely defining on $L^2(E,m)$ is called a \emph{Dirichlet form}, which is denoted by $(\mathcal{E},\mathcal{F})$, if it is closed and Markovian. Further let $(T_t)_{t\geq 0}, (G_\alpha)_{\alpha\geq 0}$ be its $L^2$-semigroup and $L^2$-resolvent. Define
 \[
	 \mathcal{E}_\alpha(u,v):= \mathcal{E}(u,v)+\alpha\cdot (u,v)_m,\quad {u,v\in \mathcal{F}}, \alpha\geq 0.
 \]
We denote  the space of all real continuous functions on $E$ by $C(E)$ and its subspace of continuous functions with compact support (resp. bounded continuous functions, continuously differentiable functions with compact supports, continuous functions which converge to zero at infinity) by $C_\mathrm{c}(E)$ (resp. $C_\mathrm{b}(E),C^1_\mathrm{c}(E), C_0(E)$). A Dirichlet form $(\mathcal{E},\mathcal{F})$ is called \emph{regular} if $\mathcal{F}\cap C_\mathrm{c}(E)$ is dense in $\mathcal{F}$ with {$\mathcal{E}_1^{\frac{1}{2}}$-norm} and dense in $C_\mathrm{c}(E)$ with uniform norm. A \emph{core} of $\mathcal{E}$ is by definition a subset $\mathcal{C}$ of $\mathcal{F}\cap C_\mathrm{c}(E)$ such that $\mathcal{C}$ is dense in $\mathcal{F}$ with {$\mathcal{E}_1^{\frac{1}{2}}$-norm} and dense in $C_\mathrm{c}(E)$ with uniform norm.  We refer more terminologies of Dirichlet forms, such as \emph{standard core}, \emph{special standard core}, \emph{recurrence}, \emph{transience}, \emph{irreducibility} and etc, to \cite{CM} and \cite{FU}. 

\begin{definition}
Let $(\mathcal{E,F})$ and $(\mathcal{E}',\mathcal{F}')$ be two regular Dirichlet forms on $L^2(E,m)$. Then $(\mathcal{E}',\mathcal{F}')$ is said to be a \emph{regular subspace} of $(\mathcal{E,F})$, denoted by 
\[
(\mathcal{E',F'})\prec (\mathcal{E,F})\text{ or }\mathcal{E}'\prec \mathcal{E},
\]
 if 
\begin{equation}
 \mathcal{F'} \subset \mathcal{F}, \qquad \mathcal{E}(u,v)=\mathcal{E'}(u,v) \quad  {u,v \in \mathcal{F'}}.
\end{equation}
If in addition, $\mathcal{F}'$ is a proper subset of $\mathcal{F}$, then $(\mathcal{E',F'})$ is said to be a \emph{proper regular subspace} of $(\mathcal{E,F})$. If $X$ and $X'$ are the associated Markov processes of $(\mathcal{E,F})$ and $(\mathcal{E}',\mathcal{F}')$, we also write $$X'\prec X$$ to stand for $(\mathcal{E',F'})\prec (\mathcal{E,F})$.
\end{definition}

For a fixed regular Dirichlet form, the main concerned problems on this topic are the existence of proper regular subspaces and how to characterize them if exist. The case of one-dimensional diffusions was discussed in \cite{FMG}, \cite{XPJ} and \cite{LY2}, which indicate that all proper regular subspaces of a fixed one-dimensional diffusion can be characterized by a class of scale functions, see Theorem~4.1 of  \cite{XPJ}. In particular, in our another article \cite{LY2}, we considered the traces of one-dimensional Brownian motion and its regular subspace on the boundary of a characteristic set and found that they consist of a beautiful Beurling-Deny type decomposition. Furthermore, the regular subspaces of general Dirichlet forms were studied in  \cite{LY}, and its main results imply that the structure of regular subspaces is invariant under several probabilistic transforms, such as spatial homeomorphous transforms, time changes with full quasi support, killing and resurrected transforms, see Theorem~2 of  \cite{LY}. Moreover, every regular subspace inherits the jumping and killing measures of original Dirichlet form. As a sequel, the regular subspaces of a local Dirichlet form are still local.

What we are concerned in this paper are the regular subspaces of \emph{skew product diffusions}. The skew product of two diffusions was first raised by K. It\^o and H.P. McKean in  \cite{KH}. Then A.R. Galmarino \cite{GAR} proved that every isotropic diffusion $X$ on $\mathbf{R}^3$ can be expressed as the following skew product of radial process $(r_t)_{t\geq 0}$ and an independent spherical Brownian motion $\vartheta$ run with a clock $(A_t)_{t\geq 0}$ depending on the radial path, i.e.
\begin{equation}\label{EQRI}
	X_t=(r_t,\vartheta_{A_t}),\quad t\geq 0.
\end{equation}
More precisely, $A$ is a positive continuous additive functional (PCAF in abbreviation) of $r$. The expressions of Dirichlet forms associated with the skew products of symmetric diffusion processes were first constructed by M. Fukushima and Y. Oshima in  \cite{MY} for some special cases on the smooth manifolds. Then H. \^Okura  \cite{OH} extended the results of \cite{MY} to general cases. Their conclusions depend on the conservativeness of two independent diffusion components. { But then, H. \^Okura in \cite{OH97} cancelled the conservativeness of $X^1$ and $X^2$. On the other hand,}  we have already employed a special case of skew product method, say the direct product, to discuss the regular subspaces of high-dimensional Brownian motions in our previous paper \cite{LY}. Thus some results in this paper, for example Theorem~\ref{THM2}, may be regarded as the extensions of some results in \cite{LY}.   

The structure of this paper is as follows. In \S\ref{SEC2}, we shall make a brief introduction to the associated Dirichlet forms of skew product diffusions and their global properties. Note that a skew product diffusion $X=(X_t)_{t\geq 0}$ can be written as
\[
	X_t:=(X^1_t,X^2_{A_t}),\quad t\geq 0,
\]
where $X^1$ and $X^2$ are two independent symmetric diffusions, and $A$ is a PCAF of $X^1$. {Thus the global properties of $X$ may have some connections with those of $X^1$ and $X^2$. Particularly, we shall also give some examples, say rotationally invariant diffusions on $\mathbf{R}^d\setminus \{0\}$, to illustrate  their associated Dirichlet forms and the relevant global properties. }

In \S\ref{RES}, our main purpose is to characterize the skew product type regular subspaces of $X$  in Theorem~\ref{THM2}. More precisely, for any skew product type regular subspace of $X$, say 
\[
	Y_t=(Y^1_t,Y^2_{\tilde{A}_t}),\quad t\geq 0,
\]
we obtain that the PCAFs $A$ and $\tilde{A}$ share the common associated smooth measure, and $Y^i$ corresponds to a regular subspace of $X^i$ for $i=1,2$.  

In particular, we shall consider the rotationally invariant diffusions and their regular subspaces on the Euclidean spaces. As noted above, these diffusions may be written as the form of skew products. In some cases, such as Example~\ref{EXA1} and \ref{EXA2}, we can give their part Dirichlet forms on the state space $\mathbf{R}^d\setminus\{0\}=(0,\infty)\times S^{d-1}$ by using Theorem~\ref{THM1}. But it will become very tough if we replace the state space by the whole Euclidean space  $\mathbf{R}^d$, since their ``skew" smooth measures are not Radon on $[0,\infty)$, and Theorem~\ref{THM1} is not valid any more. However, the rotationally invariant diffusions usually have an expression of energy form, which is also described as a distorted Brownian motion in \cite{AHS3}, on the whole Euclidean space. Thus we might do well to extend our discussions from $\mathbf{R}^d\setminus \{0\}$ to $\mathbf{R}^d$. Fortunately, the second author and his co-authors introduced an idea of one-point extension for Markov processes in \cite{CM2}, \cite{CFY}, \cite{MF3} and \cite{FT}. They glued an additional motion around the ``one-point" boundary (via the entrance law) and the original process together to produce a new extended process on the whole state space. In \S\ref{RID}, we shall also construct a similar extension from $\mathbf{R}^d\setminus \{0\}$ to $\mathbf{R}^d$ for the regular subspace of rotationally invariant diffusion. More precisely, let $(\mathcal{E,F})$ be a regular Dirichlet form on $L^2(\mathbf{R}^d,m)$, where $m$ is Radon on $\mathbf{R}^d$ and absolutely continuous with respect to the Lebesgue measure. We still denote the restricted measure of $m$ on $\mathbf{R}^d\setminus \{0\}$ by $m$. The part Dirichlet form $(\mathcal{E}^0,\mathcal{F}^0)$ of $(\mathcal{E,F})$ on $L^2(\mathbf{R}^d\setminus \{0\},m)$ is given by 
\[
\begin{aligned}
	&\mathcal{F}^0:=\{u\in \mathcal{F}:\tilde{u}(0)=0,\quad \mathcal{E}\text{-q.e.}\},\\
	&\mathcal{E}^0(u,v):=\mathcal{E}(u,v),\quad u,v\in \mathcal{F}^0,
\end{aligned}
\]
 and it is regular on $L^2(\mathbf{R}^d\setminus \{0\},m)$. Actually, the part Dirichlet form on $\mathbf{R}^d\setminus \{0\}$ of a regular subspace of $(\mathcal{E,F})$ is still a regular subspace of $(\mathcal{E}^0,\mathcal{F}^0)$. We want to explore the inverse question. In other words, for a fixed regular subspace $(\tilde{\mathcal{E}}^0,\tilde{\mathcal{F}}^0)$ of $(\mathcal{E}^0,\mathcal{F}^0)$ on $L^2(\mathbf{R}^d\setminus \{0\},m)$, i.e. 
 \[
 	(\tilde{\mathcal{E}}^0,\tilde{\mathcal{F}}^0)\prec (\mathcal{E}^0,\mathcal{F}^0),
 \]
we want to find a regular subspace $(\tilde{\mathcal{E}},\tilde{\mathcal{F}})$ of $(\mathcal{E,F})$ on $\mathbf{R}^d$, i.e. 
\[
	(\tilde{\mathcal{E}},\tilde{\mathcal{F}}) \prec (\mathcal{E,F}),
\]
such that $(\tilde{\mathcal{E}}^0,\tilde{\mathcal{F}}^0)$ is exactly the part Dirichlet form of $(\tilde{\mathcal{E}},\tilde{\mathcal{F}})$ on $\mathbf{R}^d\setminus \{0\}$. If $(\tilde{\mathcal{E}},\tilde{\mathcal{F}})$ exists, we call it the regular extension of $(\tilde{\mathcal{E}}^0,\tilde{\mathcal{F}}^0)$ on $\mathbf{R}^d$. In Theorem~\ref{THM3}, we shall prove the existence and uniqueness of regular  extension for a fixed regular subspace in the context of rotationally invariant diffusion. In particular,  even if the original Dirichlet form $(\mathcal{E},\mathcal{F})$ and its part Dirichlet form $(\mathcal{E}^0,\mathcal{F}^0)$ are equal, the regular subspace $(\tilde{\mathcal{E}}^0,\tilde{\mathcal{F}}^0)$ and its regular extension $(\tilde{\mathcal{E}},\tilde{\mathcal{F}})$ may still be different, see Corollary~\ref{COR6}. Note that the regular extension appeared in \S\ref{RID} is a special one-point extension outlined above.

\section{The Dirichlet forms of skew product diffusions and their global properties}\label{SEC2}

Let $E_1,E_2$ be two locally compact separable metric spaces and $m_1,m_2$ two Radon measures fully supported on $E_1,E_2$ respectively. Assume $(\mathcal{E}^1,\mathcal{F}^1)$ and $(\mathcal{E}^2,\mathcal{F}^2)$ are two regular strongly local Dirichlet forms on $L^2(E_1,m_1)$ and $L^2(E_2,m_2)$ respectively. Clearly 
\[
	\mathcal{C}_1:=\mathcal{F}^1\cap C_c(E_1),\quad \mathcal{C}_2:=\mathcal{F}^2\cap C_c(E_2)
\]
are their special and standard cores. Denote the Markov processes associated with $(\mathcal{E}^1,\mathcal{F}^1)$ and $(\mathcal{E}^2,\mathcal{F}^2)$ by $X^1$ and $X^2$. Then $X^1$ and $X^2$ are two symmetric diffusions without killing inside on $E_1$ and $E_2$. Without loss of generality, we assume that $X^1$ and $X^2$ are independent. Given the linear spaces $\mathcal{D}_1$ and $\mathcal{D}_2$ of functions on $E_1$ and $E_2$ respectively, we denote the \emph{tensor product} of $\mathcal{D}_1$ and $\mathcal{D}_2$ by $\mathcal{D}_1\otimes \mathcal{D}_2$. More precisely, $\mathcal{D}_1\otimes \mathcal{D}_2$ is the linear space generated by $$\{u\otimes v:u\in \mathcal{D}_1,v\in \mathcal{D}_2\},$$ where $u\otimes v(x_1,x_2)=u(x_1)v(x_2)$ for any $x_1\in E_1,x_2\in E_2$. Moreover, let
\[
	E:=E_1\times E_2,\quad m:=m_1\times m_2
\]
and 
\[
\mathcal{C}:=\mathcal{C}_1\otimes \mathcal{C}_2,
\]
i.e. the tensor product of $\mathcal{C}_1$ and $\mathcal{C}_2$. For a PCAF $A$ of $X^1$, denote its \emph{Revuz measure} by $\mu_A$. 
The following Markov process on $E$
\begin{equation}\label{EQ2}
 X_t:=(X^1_t,X^2_{A_t}),\quad t\geq 0,
\end{equation}
which is referred to \cite{MY}, 
is called \emph{the skew product of $X^1$ and $X^2$ with respect to $A$}, and we denote it by 
\[
	X=[X^1,X^2,A] \text{ or } [X^1,X^2,\mu_A].
\]
Clearly $X$ is a diffusion process on $E$. So we also call $X$ \emph{a skew product diffusion} if it makes no confusion. The $L^2$-semigroups of $X^1,X^2,X$ are denoted by $\{T^1_t\},\{T^2_t\},\{T_t\}$ respectively. 

On the other hand, a Markov process $Y$ is said to be conservative if its probability semigroup $(P^Y_t)_{t\geq 0}$ satisfies that $P^Y_t1\equiv 1$ for any $t\geq 0$. If in addition, $Y$ is symmetric, then $Y$ is conservative if and only if the symmetric measure is an invariant measure. For any measure space $(M,\lambda)$ and real Hilbert space $\mathbf{H}$, we denote by $L^2(M,\lambda;\mathbf{H})$ the real $L^2$-space of $\mathbf{H}$-value functions on $M$. {The following theorem is taken from \cite{OH97} for handy reference}.

\begin{theorem}\label{THM1}
{If  $\mu_A$ is Radon}, then $X$ is $m$-symmetric on $E$. Furthermore, the associated Dirichlet form $(\mathcal{E,F})$ of $X$ is regular on $L^2(E,m)$   and admits the following expression: for any $u\in \mathcal{F}\cap C_c(E)$, 
\begin{equation}\label{EQUEF}
\begin{aligned}
	&[x_2\rightarrow u(\cdot, x_2)\in \mathcal{F}^1]\in L^2(E_2,m_2;\mathcal{F}^1),\\
	&[x_1\rightarrow u(x_1, \cdot)\in \mathcal{F}^2]\in L^2(E_1,\mu_A;\mathcal{F}^2)
\end{aligned}\end{equation}
and 
\begin{equation}\label{EQFGMA}
	\mathcal{E}(u,u)=\int_{E_2}\mathcal{E}^1(u(\cdot, y),u(\cdot,y))m_2(dy)+\int_{E_1}\mathcal{E}^2(u(x,\cdot),u(x,\cdot))\mu_A(dx).
\end{equation}
If in particular, $\mu_A\leq Cm_1$ for some constant $C>0$, then  $\mathcal{C}$ is a core of $(\mathcal{E,F})$, and \eqref{EQUEF},  \eqref{EQFGMA} hold for any $u\in \mathcal{F}$.
\end{theorem}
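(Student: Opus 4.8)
The plan is to proceed in three stages: establish the $m$-symmetry of $X$ by computing its transition semigroup explicitly; identify $\mathcal{E}$ on the core $\mathcal{C}$; and then derive the two versions of the statement. For the symmetry, conditioning on the path of $X^1$ and using the independence of $X^1$ and $X^2$ together with the fact that, given that path, the clock $A_\cdot$ is deterministic, one obtains for bounded Borel $u$ on $E$
\[
	T_tu(x_1,x_2)=E_{x_1}\bigl[(P^2_{A_t}u(X^1_t,\cdot))(x_2)\bigr],
\]
where $\{P^2_a\}$ is the semigroup of $X^2$ and the outer expectation refers to $X^1$. To check $\langle T_tu,v\rangle_m=\langle u,T_tv\rangle_m$ for $u,v\in\mathcal{C}$, integrate first in the $E_2$-variable and use the $m_2$-symmetry of $\{P^2_a\}$ to interchange $u(X^1_t,\cdot)$ and $v(x_1,\cdot)$; then integrate in the $E_1$-variable against $m_1$, i.e.\ pass to $E_{m_1}$, and invoke the time-reversibility of $X^1$ under $m_1$ — together with the fact that the total increment $A_t$ of a PCAF over $[0,t]$ is invariant under time reversal, a standard consequence of the Revuz correspondence for symmetric processes — to interchange the starting point with $X^1_t$. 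This gives the symmetry, so $X$ admits a Dirichlet form $(\mathcal{E},\mathcal{F})$ on $L^2(E,m)$, strongly local since $X$ is a diffusion.

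Next, to identify the form on $\mathcal{C}$, fix $u=u_1\otimes u_2\in\mathcal{C}_1\otimes\mathcal{C}_2$ and expand $\tfrac1t\langle u-T_tu,u\rangle_m$ by means of the formula above. Splitting $u_1(X^1_t)(P^2_{A_t}u_2)(x_2)$ as $u_1(X^1_t)u_2(x_2)+u_1(X^1_t)\bigl((P^2_{A_t}u_2)(x_2)-u_2(x_2)\bigr)$, the first summand contributes $\tfrac1t\langle u_1-T^1_tu_1,u_1\rangle_{m_1}\,\|u_2\|_{m_2}^2\to\mathcal{E}^1(u_1,u_1)\|u_2\|_{m_2}^2$, while for the second one uses $\langle P^2_au_2-u_2,u_2\rangle_{m_2}=-a\,\mathcal{E}^2(u_2,u_2)+o(a)$ and the Revuz identity $\tfrac1t E_{m_1}\bigl[u_1(X^1_0)u_1(X^1_t)A_t\bigr]\to\mu_A(u_1^2)$ to obtain $\mathcal{E}^2(u_2,u_2)\,\mu_A(u_1^2)$; taking $u_2$ in the domain of the $L^2$-generator of $X^2$ first and then passing to the limit makes the expansions rigorous. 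Hence $\mathcal{C}\subset\mathcal{F}$, and by bilinearity $\mathcal{E}$ agrees on $\mathcal{C}$ with the right-hand side of \eqref{EQFGMA}; thus \eqref{EQUEF}--\eqref{EQFGMA} hold for $u\in\mathcal{C}$.

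In the case $\mu_A\le Cm_1$, write $\mu_A=\rho\,m_1$ with $0\le\rho\le C$ and split $\mathcal{E}|_{\mathcal{C}}$ as in \eqref{EQFGMA}. The first piece is the restriction to $\mathcal{C}$ of the direct product of $(\mathcal{E}^1,\mathcal{F}^1)$ with the zero form on $L^2(E_2,m_2)$ — the Dirichlet form of $t\mapsto(X^1_t,X^2_0)$ — hence closable with $\mathcal{C}$ a core of its closure; replacing $\mu_A$ by $m_1$ in the second piece gives the Dirichlet form of $t\mapsto(X^1_0,X^2_t)$, and $0\le\rho\le C$ shows the second piece itself is closable on $L^2(E,m)$. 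So $(\mathcal{E},\mathcal{C})$ is closable; let $(\mathcal{Q},\mathcal{D})$ be its closure. It is a regular Dirichlet form (Markovianity passes to the closure, and $\mathcal{C}\subset\mathcal{D}\cap C_c(E)$ is uniformly dense in $C_c(E)$), \eqref{EQUEF}--\eqref{EQFGMA} extend to $\mathcal{D}$ by $\mathcal{E}_1^{1/2}$-approximation, and the computation above shows $\tfrac{d}{dt}\langle T_tu,v\rangle_m\big|_{t=0}=-\mathcal{Q}(u,v)$ for $u,v\in\mathcal{C}$; since $\mathcal{C}$ is a core of $(\mathcal{Q},\mathcal{D})$ and $\{T_t\}$ is a strongly continuous symmetric contraction semigroup, $\{T_t\}$ is the $L^2$-semigroup of $(\mathcal{Q},\mathcal{D})$, whence $(\mathcal{E},\mathcal{F})=(\mathcal{Q},\mathcal{D})$, $\mathcal{C}$ is a core, and \eqref{EQUEF}--\eqref{EQFGMA} hold on all of $\mathcal{F}$.

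When $\mu_A$ is merely Radon it may have a singular part relative to $m_1$; the comparison used above is then unavailable and $\mathcal{C}$ need not be a core. One still has $\mathcal{C}\subset\mathcal{F}\cap C_c(E)$ with $\mathcal{E}|_{\mathcal{C}}$ given by \eqref{EQFGMA}, and the remaining assertions — regularity of $(\mathcal{E},\mathcal{F})$ and validity of \eqref{EQUEF}--\eqref{EQFGMA} on $\mathcal{F}\cap C_c(E)$ — are obtained through the skew-product structure of the energy measure: a stochastic-calculus computation on $X_t=(X^1_t,X^2_{A_t})$ decomposes the martingale part of $u(X_t)$ into the contributions of the two independent driving mechanisms, so that the energy measure of $u\in\mathcal{F}\cap C_c(E)$ splits as $\mu^1_{\langle u(\cdot,x_2)\rangle}(dx_1)\,m_2(dx_2)+\mu^2_{\langle u(x_1,\cdot)\rangle}(dx_2)\,\mu_A(dx_1)$, whose total mass is \eqref{EQFGMA}; regularity follows once one knows every such $u$ has the sections in \eqref{EQUEF}. \textbf{This general Radon case is the main obstacle}: because $\mu_A$ can be singular with respect to $m_1$ it cannot be approximated by absolutely continuous measures without losing mass, so one cannot reduce to the previous case, and the energy-measure decomposition, together with the density of $\mathcal{F}\cap C_c(E)$ in $\mathcal{F}$ that yields regularity, has to be established directly rather than by the clean functional-analytic route available when $\mu_A\le Cm_1$.
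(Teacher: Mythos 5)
First, a point of reference: the paper does not prove Theorem~\ref{THM1} at all --- it is quoted from \^Okura \cite{OH97} (building on \cite{MY} and \cite{OH}) ``for handy reference'', so there is no in-paper argument to compare against. Your skeleton --- $m$-symmetry of $T_t$ via time reversal of $X^1$ under $P_{m_1}$ together with the reversal-invariance of the PCAF increment $A_t$, computation of the approximating form $\tfrac1t\langle u-T_tu,u\rangle_m$ on $\mathcal{C}_1\otimes\mathcal{C}_2$ via the Revuz correspondence, and a closability argument for the explicit expression when $\mu_A\le Cm_1$ --- is the right one and matches the cited literature in spirit. The symmetry computation and the identification of $\mathcal{E}$ restricted to $\mathcal{C}$ with the right-hand side of \eqref{EQFGMA} are essentially sound (your closability argument for the second piece also works, precisely because $\mu_A\le Cm_1$ makes the embedding $L^2(m)\hookrightarrow L^2(\mu_A\times m_2)$ continuous, so the form that is closed on the latter space is closable on the former).

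There is, however, a genuine gap at the step where you conclude $(\mathcal{E},\mathcal{F})=(\mathcal{Q},\mathcal{D})$. What your computation actually yields is $\mathcal{C}\subset\mathcal{F}$ with $\mathcal{E}=\mathcal{Q}$ on $\mathcal{C}$; since $(\mathcal{E},\mathcal{F})$ is closed, this gives only $\mathcal{D}\subset\mathcal{F}$ and $\mathcal{E}=\mathcal{Q}$ on $\mathcal{D}$, i.e.\ $(\mathcal{Q},\mathcal{D})\prec(\mathcal{E},\mathcal{F})$ in the terminology of this very paper. The inference ``$\tfrac{d}{dt}\langle T_tu,v\rangle_m|_{t=0}=-\mathcal{Q}(u,v)$ for $u,v$ in a form core of $\mathcal{Q}$, hence $\{T_t\}$ is the $L^2$-semigroup of $(\mathcal{Q},\mathcal{D})$'' is false: the Dirichlet and Neumann Laplacians on an interval both satisfy it with $\mathcal{C}=C_c^\infty$ of the open interval, and, more to the point, every proper regular subspace studied in this paper is a counterexample to exactly this kind of reasoning. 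Closing the gap requires something extra --- e.g.\ showing $G_\alpha\mathcal{C}\subset\mathcal{D}$ for the resolvent of $X$, or a Trotter-type identification of $T_t$ with the semigroup generated by $\mathcal{Q}$ --- and this is precisely the technical heart of \cite{MY}, \cite{OH} and \cite{OH97}. Finally, for merely Radon $\mu_A$ you concede that the energy-measure decomposition, the membership of the sections asserted in \eqref{EQUEF}, and the regularity of $(\mathcal{E},\mathcal{F})$ remain ``to be established directly''; as written, that part is a statement of intent rather than a proof, so the theorem in its stated generality is not established.
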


{ The Dirichlet forms of skew product diffusions were first constructed for a special case, say $X^1, X^2$ are $C_c^\infty$-regular, in  \cite{MY}. More precisely, $E_1,E_2$ are $C^\infty$-differentiable manifolds, and 
	\[
		\mathcal{C}_i=C_c^\infty(E_i),
	\]
i.e. the class of all infinitely differentiable functions with compact support on $E_i$ for $i=1,2$. The other settings are the same as above. Then the skew product diffusion \eqref{EQ2} is $m$-symmetric and its associated Dirichlet form is regular with a core $C_c^\infty(E)$. After that, H. \^Okura \cite{OH} extended the results of \cite{MY} to general cases, and in \cite{OH97}, he omitted the conservativeness of $X^1$ and $X^2$. 

Our main concerns are the regular subspaces of $(\mathcal{E},\mathcal{F})$. 
Since the existence and expression of regular subspaces depend on the global properties of original Dirichlet form, we are now in position to present some global properties of $X$ for handy reference, which are taken from \cite{MY} \cite{OH} and \cite{OH97}.

\begin{proposition}\label{PRO}
	Under the same assumptions of Theorem~\ref{THM1}, the following assertions are right:
	\begin{itemize}
	\item[(1)]  if $(\mathcal{E}^1,\mathcal{F}^1)$ and $(\mathcal{E}^2,\mathcal{F}^2)$ are both recurrent, and either $\mu_A(E_1)<\infty$ or $m_2(E_2)<\infty$, then $(\mathcal{E,F})$ is recurrent;
	\item[(2)] if $(\mathcal{E}^1,\mathcal{F}^1)$ is transient,  then $(\mathcal{E,F})$ is also transient; 
	\item[(3)] $(\mathcal{E,F})$ is irreducible if and only if $(\mathcal{E}^1,\mathcal{F}^1)$ and $(\mathcal{E}^2,\mathcal{F}^2)$ are both irreducible.
	\end{itemize}
\end{proposition}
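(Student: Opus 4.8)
The plan is to deduce all three assertions from the explicit form \eqref{EQFGMA} of $\mathcal{E}$ together with the standard probabilistic/functional criteria for recurrence, transience and irreducibility of Dirichlet forms (cf. Chapter~1 of \cite{FU}). Throughout I would use that, by Theorem~\ref{THM1}, $\mathcal{C}=\mathcal{C}_1\otimes\mathcal{C}_2$ is a core whenever $\mu_A\le Cm_1$, but for the general Radon case I would work with $\mathcal{F}\cap C_c(E)$ and the decomposition \eqref{EQUEF}, \eqref{EQFGMA}, approximating by functions of product type where needed.

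For (1), the natural route is the criterion: $(\mathcal{E},\mathcal{F})$ is recurrent iff $1\in\mathcal{F}_e$ and $\mathcal{E}(1,1)=0$, equivalently there exist $u_n\in\mathcal{F}$ with $u_n\to 1$ $m$-a.e.\ and $\mathcal{E}(u_n,u_n)\to 0$. Since $X^1,X^2$ are recurrent, pick $u_n^i\in\mathcal{F}^i$ with $u_n^i\to 1$ $m_i$-a.e.\ and $\mathcal{E}^i(u_n^i,u_n^i)\to 0$; one may take $0\le u_n^i\le 1$ by the Markovian property (unit contraction). Set $u_n:=u_n^1\otimes u_n^2$. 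Plugging into \eqref{EQFGMA} gives
\[
\mathcal{E}(u_n,u_n)=\int_{E_2}(u_n^2)^2\,dm_2\cdot\mathcal{E}^1(u_n^1,u_n^1)+\int_{E_1}(u_n^1)^2\,d\mu_A\cdot\mathcal{E}^2(u_n^2,u_n^2).
\]
If $m_2(E_2)<\infty$ the first term is bounded by $m_2(E_2)\mathcal{E}^1(u_n^1,u_n^1)\to 0$; if $\mu_A(E_1)<\infty$ the second term is bounded by $\mu_A(E_1)\mathcal{E}^2(u_n^2,u_n^2)\to 0$. To make both terms vanish one should choose the two approximating sequences with matched decay rates, or first let one index go to infinity; a convenient device is to take $u_n^1$ supported in larger and larger compacts with $\mu_A$-mass controlled and similarly for $u_n^2$, then use that $u_n^1u_n^2\to 1$ $m$-a.e. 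I expect this bookkeeping — getting a single sequence $u_n\to 1$ with both integrals small — to be the only delicate point, resolved by a diagonal argument over the two cores.

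For (2), I would use that transience of $(\mathcal{E}^1,\mathcal{F}^1)$ yields a bounded $m_1$-integrable reference function $g_1>0$ with $\int_{E_1} u\,g_1\,dm_1\le \mathcal{E}^1(u,u)^{1/2}$ for $u\in\mathcal{F}^1$ (existence of such $g$ is precisely the transience criterion). Taking $g:=g_1\otimes h$ for a suitable $h\in L^1(E_2,m_2)_+$ and integrating the first term of \eqref{EQFGMA} over $E_2$, one obtains the analogous transience inequality for $(\mathcal{E},\mathcal{F})$ on the dense set of product functions, hence on $\mathcal{F}_e$ by approximation; thus $(\mathcal{E},\mathcal{F})$ is transient. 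Here the key observation is simply that the first integral in \eqref{EQFGMA} alone already dominates a weighted $L^1$-norm coming from $X^1$. For (3), irreducibility means: if $T_tf=f$ for all $t$ (equivalently $f\in\mathcal{F}$ with $\mathcal{E}(f,f)=0$ and $1_B$-type invariance), then $f$ is constant; equivalently any $\mathcal{E}$-invariant set has measure $0$ or full measure. From \eqref{EQFGMA}, $\mathcal{E}(f,f)=0$ forces $\mathcal{E}^1(f(\cdot,y),f(\cdot,y))=0$ for $m_2$-a.e.\ $y$ and $\mathcal{E}^2(f(x,\cdot),f(x,\cdot))=0$ for $\mu_A$-a.e.\ $x$; irreducibility of $X^1$ makes $f(\cdot,y)$ constant in the first variable for a.e.\ $y$, and then irreducibility of $X^2$ makes the resulting function of $y$ constant, so $f$ is constant — giving the "if" direction. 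For "only if", if say $X^1$ is not irreducible, lift a non-trivial $X^1$-invariant set $B_1\subset E_1$ to $B_1\times E_2$ and check via \eqref{EQFGMA} that $1_{B_1\times E_2}$ is $\mathcal{E}$-invariant and non-trivial (using that $\mu_A$ charges $B_1$ and its complement appropriately, since $\mu_A$ is equivalent in null sets to $m_1$ on the relevant part — or handle the degenerate case separately); symmetrically for $X^2$. The one subtlety I anticipate is the interplay between $m_1$ and $\mu_A$ in the second variable's contribution — one must ensure that sets which are $m_1$-nontrivial are also $\mu_A$-nontrivial on the support of $A$ — but under the stated setup ($A$ a PCAF with $\mu_A$ Radon, fully supported pieces) this is routine, and in any case the cleanest proof is to cite the corresponding statements in \cite{MY}, \cite{OH}, \cite{OH97} and merely indicate these Dirichlet-form computations.
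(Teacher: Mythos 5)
The paper does not prove Proposition~2.2 at all: it is explicitly ``taken from \cite{MY}, \cite{OH} and \cite{OH97}'' for handy reference, so the only fair comparison is with those references. Your sketches of (1) and (2) follow the standard functional criteria and are essentially sound. Two caveats on (1)--(2): first, you repeatedly plug tensor products $u^1\otimes u^2$ of core elements into \eqref{EQFGMA}, but Theorem~2.1 as stated only guarantees $\mathcal{C}_1\otimes\mathcal{C}_2\subset\mathcal{F}$ when $\mu_A\le Cm_1$; for a general Radon $\mu_A$ you must either invoke the actual construction in \cite{OH97} or reduce to the dominated case by a time change of $X^1$, exactly as the paper does in the proof of Theorem~3.2. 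Second, the diagonal choice of indices in (1) does work, because $0\le u^1_n\le 1$ with compact support gives $\int (u^1_n)^2\,d\mu_A<\infty$ for each fixed $n$ (and $\le\mu_A(E_1)$ in the other case), so one index can be sent to infinity first; your remark that this is ``the only delicate point'' is accurate for (1). For (2) your computation closes with Cauchy--Schwarz in $y$ provided you take $h\in L^1\cap L^2(m_2)$ and normalize, and then extend from $\mathcal{F}\cap C_c(E)$ to $\mathcal{F}$ by approximation; this is fine.

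The genuine gap is in the ``if'' direction of (3). You argue from ``$\mathcal{E}(f,f)=0$ forces $f$ constant,'' but that is not the definition of irreducibility and is not equivalent to it: irreducibility is the triviality of every $\mathcal{E}$-invariant set $B$, characterized by $1_Bu\in\mathcal{F}$ for all $u\in\mathcal{F}$ together with the splitting $\mathcal{E}(u,u)=\mathcal{E}(1_Bu,1_Bu)+\mathcal{E}(1_{B^c}u,1_{B^c}u)$ (Theorem~1.6.1 of \cite{FU}). The indicator $1_B$ of an invariant set is in general \emph{not} an element of $\mathcal{F}$ or even of the extended Dirichlet space (think of the transient case), so you cannot substitute $f=1_B$ into \eqref{EQFGMA} and slice in each variable. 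Showing that an invariant set of the product form must be a product-trivial set requires working with the splitting identity applied to genuine elements $u\in\mathcal{F}\cap C_c(E)$ and disentangling the two integrals in \eqref{EQFGMA}, which is the nontrivial content of \^Okura's argument and is precisely why the paper cites it rather than reproving it. By contrast, your ``only if'' direction is correct and actually simpler than you fear: if $B_1$ is a nontrivial $\mathcal{E}^1$-invariant set, then $B_1\times E_2$ satisfies the splitting identity directly from \eqref{EQFGMA} (the second integral splits over $B_1$ and $B_1^c$ for trivial reasons, with no $\mu_A$-versus-$m_1$ issue), and it is $m$-nontrivial because $m=m_1\times m_2$ with $m_2(E_2)>0$; symmetrically for $E_1\times B_2$.
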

}

As outlined in \S\ref{intro}, every rotationally invariant diffusion on $\mathbf{R}^d\setminus\{0\}$ has a representation \eqref{EQRI}, and we also denote it by
\[
	X=(r_t,\vartheta_{A_t})_{t\geq 0}:=[r, A].
\]
If the associated Revuz measure of $A$ is $\mu$, we also write
\begin{equation}\label{EQXRM}
	X=[r,\mu].
\end{equation}
Note that this is a speical case of skew product diffusion, i.e. 
\[
	E_1=(0,\infty),\quad E_2=S^{d-1},\quad X^1=r, \quad X^2=\vartheta.
\]
We refer the minimal diffusion to Example~3.5.7 of  \cite{CM}.
Let $\mathcal{R}$ be a class of rotationally invariant diffusions as follows
\begin{equation}
	\mathcal{R}:=\big\{X=[r,\mu]: r\text{ is a minimal diffusion, and }\mu \text{ is Radon on }(0,\infty)\big\}.
\end{equation}

\begin{remark}\label{RM3}
 The purpose of this note is to illustrate the global properties of minimal diffusion $r$. All minimal diffusions are irreducible. Denote the scale function and speed measure  of $r$  by $p$ and $l$. Then $r$ is transient if and only if $0$ or $\infty$ is approachable relative to $r$, i.e. $p(0+):=\lim_{x\downarrow 0}p(x)>-\infty$ or $p(\infty):=\lim_{x\uparrow \infty}p(x)<\infty$. Otherwise, $r$ is recurrent. We refer the definition of \emph{approachable boundary point in finite time} of $r$ to Example~3.5.7 of  \cite{CM}. It is well known that $0$ (resp. $\infty$) is approachable in finite time if and only if for some constant $c\in (0,\infty)$, 
\[
	\int_0^c l((x,c))dp(x)<\infty,\quad (\text{resp.} \int_c^\infty l((c,x))dp(x)<\infty).
\]
Clearly every regular boundary point is approachable in finite time. Note that $r$ is conservative if and only if neither $0$ nor $\infty$ is approachable in finite time.
\end{remark}
 
We can characterize the global properties of \eqref{EQXRM} as follows.  

\begin{proposition}\label{COR5}
	Let $X=[r,\mu]\in\mathcal{R}$. Then $X$ is always irreducible. Furthermore, $X$ is recurrent (resp. transient, conservative) if and only if $r$ is recurrent (resp. transient, conservative). 
\end{proposition}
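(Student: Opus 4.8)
The plan is to reduce everything to the corresponding facts about the minimal diffusion $r$ and the spherical motion $\vartheta$, using the structure of the Dirichlet form in Theorem~\ref{THM1} and the global properties recorded in Proposition~\ref{PRO}. First I would note that $\vartheta$, the spherical Brownian motion on $S^{d-1}$, is a compact state space diffusion and hence recurrent, conservative and irreducible; this is well known and requires no argument. Thus $(\mathcal{E}^2,\mathcal{F}^2)$ is recurrent and irreducible, and $m_2(S^{d-1})<\infty$. With these facts the three equivalences follow largely by specializing Proposition~\ref{PRO} to $X^1=r$, $X^2=\vartheta$.

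For irreducibility, Proposition~\ref{PRO}(3) says $(\mathcal{E},\mathcal{F})$ is irreducible iff both $(\mathcal{E}^1,\mathcal{F}^1)$ and $(\mathcal{E}^2,\mathcal{F}^2)$ are; since every minimal diffusion is irreducible (Remark~\ref{RM3}) and $\vartheta$ is irreducible, $X$ is always irreducible. For the recurrent case: if $r$ is recurrent then both components are recurrent and $m_2(S^{d-1})<\infty$, so Proposition~\ref{PRO}(1) gives recurrence of $X$. Conversely, if $r$ is transient then Proposition~\ref{PRO}(2) forces $X$ to be transient, hence not recurrent; combined with the fact that $r$ is either recurrent or transient (Remark~\ref{RM3}) this yields the equivalence for both recurrence and transience simultaneously. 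So recurrence and transience of $X$ are each equivalent to the corresponding property of $r$.

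The one genuinely new point — and the step I expect to be the main obstacle — is the equivalence for \emph{conservativeness}, since Proposition~\ref{PRO} says nothing about it directly. I would argue this using the skew-product expression of the semigroup: because $\vartheta$ is conservative, the clock $A$ is a PCAF of $r$, and $A_t<\infty$ for all $t$ by the Radon property of $\mu$, one has $P_t 1(x_1,x_2) = E_{x_1}[\,\vartheta\text{-mass of }S^{d-1}\text{ at time }A_t\,] = E^{r}_{x_1}[\,P^{\vartheta}_{A_t}1(x_2)\,] \cdot \mathbf{1}_{\{r \text{ alive at } t\}}$, which reduces to $P^{X}_t 1 = P^r_t 1$ on the nose. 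Hence $X$ is conservative iff $r$ is. Care must be taken that $A_t$ does not explode and that no mass is lost through the lifetime of $\vartheta$ (there is none, $\vartheta$ being conservative) so that the only possible loss of mass for $X$ comes from the lifetime of $r$; this is where the assumption $X=[r,\mu]\in\mathcal{R}$ with $\mu$ Radon is used. Alternatively one can invoke the symmetric-process criterion that conservativeness is equivalent to $1\in\mathcal{F}_e$ with $\mathcal{E}(1,1)=0$ and read this off from \eqref{EQFGMA}, noting that the second integral against $\mu_A$ vanishes on constants and the first reduces to $m_2(S^{d-1})\,\mathcal{E}^1(1,1)$, so it vanishes iff $\mathcal{E}^1(1,1)=0$, i.e. iff $r$ is conservative. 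Either route closes the proof.
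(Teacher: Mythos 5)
Your primary argument is correct and follows essentially the same route as the paper: irreducibility from Proposition~\ref{PRO}(3), recurrence/transience from Proposition~\ref{PRO}(1)--(2) together with the dichotomy forced by irreducibility, and conservativeness from the observation that, $\vartheta$ being conservative and $A_t<\infty$ on $\{t<\zeta^r\}$ (as $A$ is a PCAF of $r$), the only loss of mass for $X$ comes from the lifetime of $r$. The paper compresses this last point into the single remark that $r=|X|$, so your semigroup computation $P^X_t1=P^r_t1$ is just the justification spelled out; that is fine.

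One caution: your proposed \emph{alternative} route for conservativeness is not valid. The criterion ``$1\in\mathcal{F}_\mathrm{e}$ with $\mathcal{E}(1,1)=0$'' characterizes \emph{recurrence} (Theorem~1.6.3 of \cite{FU}), not conservativeness, and the two notions genuinely differ for transient processes: the Bessel process in dimension $d\geq 3$ (Example~\ref{EXA1}) is conservative but transient, so $1$ does not lie in its extended Dirichlet space with zero energy. Consequently the claimed equivalence ``$\mathcal{E}^1(1,1)=0$ iff $r$ is conservative'' is false, and reading conservativeness off \eqref{EQFGMA} in this way would only reprove the recurrence equivalence you already have. Since you present this only as an alternative and the first route stands on its own, the proof as a whole is still correct; just delete or repair that last paragraph.
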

\begin{proof}
The irreducibility of $X$ follows from {Proposition~\ref{PRO} (3)} and the irreducibility of $r$. Since $r$ is the radius part of $X$, i.e. $r=|X|$, it follows that $X$ is conservative if and only if $r$ is conservative.

If $r$ is transient, then it follows from {Proposition~\ref{PRO} (2)} that $X$ is transient. If $r$ is recurrent, it follows from {Proposition~\ref{PRO} (1)} that $X$ is also recurrent. Note that the irreducibility of $X$ (resp. $r$) implies that $X$ (resp. $r$) is recurrent or transient. That completes the proof.  
\end{proof}

The two examples below are two typical examples of skew product diffusions. In fact, they both belong to $\mathcal{R}$ for a fixed number $d\geq 2$. We shall also analyse their global properties by using the above conclusions.

\begin{example}\label{EXA1}
	Fix a natural number $d\geq 2$. Let 
	\[
		E_1=(0,\infty),\quad E_2=S^{d-1}:=\{x\in \mathbf{R}^d: |x|=1\}.
	\]
	 Assume that $(r_t)_{t\geq 0}$ is the Bessel process on $(0,\infty)$ with scale function $p$ and  speed measure $x^{d-1}dx$, where $p(x)=\log x$ when $d=2$ and $1/(2-d)\cdot x^{2-d}$ when $d\geq 3$. Let $\vartheta$ be the spherical Brownian motion on $S^{d-1}$, which corresponds to the \emph{Beltrami-Laplace operator} $\Delta_{S^{d-1}}$. Further set 
	\[
		A_t:=\int_0^t \frac{1}{r_s^2}ds,\quad t\geq 0.
	\]
	Then the skew product diffusion $$B=(r_t, \vartheta_{A_t})_{t\geq 0}$$ is exactly the $d$-dimensional Brownian motion on $\mathbf{R}^d\setminus \{0\}$. Notice that $\{0\}$ is polar relative to  $d$-dimensional Brownian motion. It is well known that $\vartheta$ is always irreducible, recurrent and hence conservative. The Bessel process $r$ is always irreducible and it is  recurrent when $d\leq 2$. Thus  from {Proposition~\ref{PRO} (1) and (3)}, we obtain that the $d$-dimensional Brownian motion is irreducible and recurrent when $d=2$. Otherwise, it is irreducible and transient. 
\end{example}

\begin{example}\label{EXA2}
 	Let 
 \[
 	E_1=(0,\infty),\quad E_2=S^2=\{x\in\mathbf{R}^3:|x|=1\}.
 	\]
  Fix a constant $\gamma>0$. A symmetric diffusion $(r_t)_{t\geq 0}$ on $(0,\infty)$ killed upon hitting $\{0\}$ is characterized by its scale function $s$ and speed measure $l$:
 	\[
 \begin{aligned}
 	&s(x)=\frac{1}{4\gamma^2}\text{e}^{2\gamma x}, \\&l(dx)=2\gamma \text{e}^{-2\gamma x}dx.
 \end{aligned}\]
Note that $l$ is a finite measure on $(0,\infty)$. Thus $0$ is a \emph{regular boundary point} of $(r_t)_{t\geq 0}$, and its another boundary point $\infty$ is \emph{unapproachable} (Cf. \S2.2.3 of  \cite{CM}). As a result,  $(r_t)_{t\geq 0}$ is irreducible and transient, and $\{0\}$ is not polar with respect to  $(r_t)_{t\geq 0}$. On the other hand, similar to  Example~\ref{EXA1}, let $\vartheta$ be the spherical Brownian motion on $S^2$ and 
 \[
		A_t:=\int_0^t \frac{1}{r_s^2}ds,\quad t\geq 0.
	\]
Then the skew product diffusion 
\[
	X^0:=(r_t,\vartheta_{A_t})_{t\geq 0}
	\]
 is a rotationally invariant diffusion on $\mathbf{R}^3\setminus \{\mathbf{0}\}$  killed upon hitting $\{\mathbf{0}\}$. Since $r$ is irreducible and transient, it follows that $X^0$ is also irreducible and transient. 

In fact, $X^0$ is  the part process on $\mathbf{R}^3\setminus \{\mathbf{0}\}$  of a diffusion $X$ on $\mathbf{R}^3$ induced by the following energy form on $L^2(\mathbf{R}^3,\psi_\gamma^2dx)$:
\begin{equation}\label{EQFEF}
	\begin{aligned}
		&\mathcal{F}:=\{u\in L^2(\mathbf{R}^3,\psi_\gamma^2dx): \nabla u\in L^2(\mathbf{R}^3,\psi_\gamma^2dx)\}\\
		&\mathcal{E}(u,v):=\frac{1}{2}\int_{\mathbf{R}^3} \nabla u(x)\cdot \nabla v(x) \psi_\gamma(x)^2dx,\quad u,v\in \mathcal{F},
	\end{aligned}
\end{equation}
 where $\nabla u$ is the gradient of $u$ in  sense of weak distribution and
\begin{equation}\label{EQPHI}
\psi_\gamma(x):=\frac{\sqrt{\gamma}}{\sqrt{2\pi}}\frac{\text{e}^{-\gamma |x|}}{|x|}.
\end{equation}
In particular, $C_c^\infty(\mathbf{R}^3)$ is a core of $(\mathcal{E,F})$, and $\{\mathbf{0}\}$ is not $\psi_\gamma^2dx$-polar with respect to $X$. As a sequel, the potential behaviour of $X$ outside $\{\mathbf{0}\}$, i.e. its part process $X^0$, is very similar to  3-dimensional Brownian motion, whereas it becomes very wired upon approaching $\{0\}$.  We refer more details to  \cite{FL2}.
\end{example}

\section{The regular subspaces of skew product diffusions}\label{RES}

Our main purpose of this section is to characterize the regular subspaces of skew product diffusions outlined in Theorem~\ref{THM1}. Before that, we need to give some more notations. 
Denote all $m_i$-symmetric diffusions on $E^i$, whose associated Dirichlet forms are regular and strongly local on $L^2(E_i,m_i)$, by $\mathcal{D}(E_i)$ for $i=1,2$. Given $X^1\in \mathcal{D}(E_1)$, $\text{S}_{E_1}(X^1)$ represents all Radon smooth measures on $E_1$ with respect to $X^1$. Define the class of all  $m$-symmetric skew product diffusions on $E$ characterized in Theorem~\ref{THM1} by
\begin{equation}
	\mathcal{D}_{\mathrm{sp}}(E):= \bigg\{ X=[X^1,X^2,\mu]:X^i\in \mathcal{D}(E_i),i=1,2; \mu \in \text{S}_{E_1}(X^1) \bigg\}.
\end{equation}
For any $X\in \mathcal{D}_{\mathrm{sp}}(E)$, we also write $X\in \mathcal{D}_{\mathrm{sp}}$ since the state space is fixed. If  $(\mathcal{E,F})$ is the associated Dirichlet form   of $X$, we also write $(\mathcal{E,F})\in \mathcal{D}_{\mathrm{sp}}$ or $\mathcal{E}\in \mathcal{D}_{\mathrm{sp}}$ to represent $X\in \mathcal{D}_{\mathrm{sp}}$ for convenience. Recall that for two regular Dirichlet forms $(\mathcal{E,F})$ and $(\mathcal{E',F'})$ on a same Hilbert space, $(\mathcal{E',F'})\prec (\mathcal{E,F})$ means
\[
	\mathcal{F}'\subset \mathcal{F},\quad \mathcal{E}(u,v)=\mathcal{E}'(u,v)\quad u,v\in \mathcal{F}'.
\]
Fixing a constant $c>0$, we say $(\mathcal{E',F'})$ is \emph{a regular subspace of} $(\mathcal{E,F})$ \emph{up to the constant} $c$, denoted by 
\[
	(\mathcal{E',F'})\prec_c (\mathcal{E,F}),
\]
if
\[
	\mathcal{F}'\subset \mathcal{F},\quad \mathcal{E}(u,v)=c\cdot \mathcal{E}'(u,v),\quad u,v\in \mathcal{F}'.
\]
When $c=1$, the notation $``\prec_c"$ is exactly the same as $``\prec"$.
Our main result of this section is as follows. The special case $\mu=m_1$ of this theorem has already been discussed in  \cite{LY}.

\begin{theorem}\label{THM2}
	Let $X=[X^1,X^2,\mu],\tilde{X}=[\tilde{X}^1,\tilde{X}^2,\tilde{\mu}]\in \mathcal{D}_{\mathrm{sp}}$. The associated Dirichlet  forms of $X^1,X^2,X,\tilde{X}^1,\tilde{X}^2,\tilde{X}$ are denoted by $(\mathcal{E}^1,\mathcal{F}^1)$, $(\mathcal{E}^2,\mathcal{F}^2)$, $(\mathcal{E},\mathcal{F})$, $(\tilde{\mathcal{E}}^1,\tilde{\mathcal{F}}^1)$, $(\tilde{\mathcal{E}}^2,\tilde{\mathcal{F}}^2)$, $(\tilde{\mathcal{E}},\tilde{\mathcal{F}})$ on appropriate Hilbert spaces $L^2(E_1,m_1)$, $L^2(E_2,m_2)$ or $L^2(E,m)$. Then $(\tilde{\mathcal{E}},\tilde{\mathcal{F}})\prec (\mathcal{E},\mathcal{F})$ if and only if there exists a constant $c>0$ such that
\begin{equation}\label{EQMEF}
\tilde{\mu}=c\cdot \mu,\quad (\tilde{\mathcal{E}}^1,\tilde{\mathcal{F}}^1)\prec (\mathcal{E}^1,\mathcal{F}^1),\quad (\tilde{\mathcal{E}}^2,\tilde{\mathcal{F}}^2)\prec_c (\mathcal{E}^2,\mathcal{F}^2).
\end{equation}
In particular, $(\tilde{\mathcal{E}},\tilde{\mathcal{F}})$ is a proper regular subspace of $(\mathcal{E,F})$ if and only if in addition $\tilde{\mathcal{F}}^1\neq \mathcal{F}^1$ or $\tilde{\mathcal{F}}^2\neq \mathcal{F}^2$.
\end{theorem}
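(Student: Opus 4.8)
The plan is to prove both directions via the explicit expression \eqref{EQFGMA} for the energy of a skew product diffusion, comparing the two decompositions term by term after a careful core argument.

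\medskip

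\emph{Sufficiency.} Assume $\tilde\mu = c\mu$, $(\tilde{\mathcal{E}}^1,\tilde{\mathcal{F}}^1)\prec(\mathcal{E}^1,\mathcal{F}^1)$ and $(\tilde{\mathcal{E}}^2,\tilde{\mathcal{F}}^2)\prec_c(\mathcal{E}^2,\mathcal{F}^2)$. First I would show $\tilde{\mathcal F}\subset\mathcal F$. Take $u\in\tilde{\mathcal F}\cap C_c(E)$; by Theorem~\ref{THM1} applied to $\tilde X$, $u(\cdot,y)\in\tilde{\mathcal F}^1\subset\mathcal F^1$ for $m_2$-a.e.\ $y$ and $u(x,\cdot)\in\tilde{\mathcal F}^2\subset\mathcal F^2$ for $\tilde\mu$-a.e.\ $x$, i.e.\ for $\mu$-a.e.\ $x$ since $\tilde\mu=c\mu$. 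The measurability/integrability conditions \eqref{EQUEF} for $X$ follow from those for $\tilde X$ together with the fact that $\mathcal{E}^1\geq\tilde{\mathcal E}^1$ on $\tilde{\mathcal F}^1$ and $\mathcal{E}^2 = c\,\tilde{\mathcal E}^2$ on $\tilde{\mathcal F}^2$ (here I would note the subspace inequality $\mathcal{E}^i_1(u,u)\le \mathcal{E}^i_1(u,u)$ is in fact equality on the subspace, so the relevant $L^2(E_2,m_2;\mathcal F^1)$- and $L^2(E_1,\mu;\mathcal F^2)$-norms of the section maps are controlled). Hence, by the characterization of $\mathcal F$ through \eqref{EQUEF}, $u\in\mathcal F$; and comparing \eqref{EQFGMA} for $X$ and $\tilde X$,
\[
\mathcal{E}(u,u)=\int_{E_2}\mathcal{E}^1(u(\cdot,y),u(\cdot,y))\,m_2(dy)+\int_{E_1}\mathcal{E}^2(u(x,\cdot),u(x,\cdot))\,\mu(dx),
\]
while
\[
\tilde{\mathcal{E}}(u,u)=\int_{E_2}\tilde{\mathcal{E}}^1(u(\cdot,y),u(\cdot,y))\,m_2(dy)+\int_{E_1}\tilde{\mathcal{E}}^2(u(x,\cdot),u(x,\cdot))\,\tilde\mu(dx).
\]
Since $\mathcal{E}^1=\tilde{\mathcal E}^1$ on $\tilde{\mathcal F}^1$, $\mathcal{E}^2=c\,\tilde{\mathcal E}^2$ on $\tilde{\mathcal F}^2$, and $\mu(dx)=c^{-1}\tilde\mu(dx)$, the two right-hand sides agree, so $\mathcal{E}(u,u)=\tilde{\mathcal E}(u,u)$ on the core $\tilde{\mathcal F}\cap C_c(E)$. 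A routine approximation in $\tilde{\mathcal E}_1^{1/2}$-norm (using that $\tilde{\mathcal F}\cap C_c(E)$ is a core of $\tilde{\mathcal E}$ and that $\mathcal{E}_1\le \tilde{\mathcal E}_1$ on this core forces the $\tilde{\mathcal E}_1$-limit to lie in $\mathcal F$ with matching energy) extends the inclusion and the identity to all of $\tilde{\mathcal F}$, giving $(\tilde{\mathcal E},\tilde{\mathcal F})\prec(\mathcal E,\mathcal F)$. Polarization then yields the bilinear identity.

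\medskip

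\emph{Necessity.} Assume $(\tilde{\mathcal E},\tilde{\mathcal F})\prec(\mathcal E,\mathcal F)$. I would extract the three conditions in \eqref{EQMEF} by testing against product functions. Fix $w\in\mathcal C_2=\mathcal F^2\cap C_c(E_2)$ with $\mathcal E^2(w,w)>0$, and for $v\in\tilde{\mathcal F}^1\cap C_c(E_1)$ consider $u=v\otimes w$. By Theorem~\ref{THM1} for $\tilde X$ one must first check $v\otimes w\in\tilde{\mathcal F}$; this needs $w\in\tilde{\mathcal F}^2$, so actually I would run the argument the other way, first identifying $\tilde{\mathcal F}^2$ and $\tilde{\mathcal F}^1$ as subsets of $\mathcal F^2$, $\mathcal F^1$. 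Concretely: for $u\in\tilde{\mathcal F}\cap C_c(E)$, Theorem~\ref{THM1} gives $u(\cdot,y)\in\tilde{\mathcal F}^1$ for $m_2$-a.e.\ $y$ and $u(x,\cdot)\in\tilde{\mathcal F}^2$ for $\tilde\mu$-a.e.\ $x$; since $\tilde{\mathcal F}\subset\mathcal F$, Theorem~\ref{THM1} for $X$ forces $u(\cdot,y)\in\mathcal F^1$ for $m_2$-a.e.\ $y$ and $u(x,\cdot)\in\mathcal F^2$ for $\mu$-a.e.\ $x$. Running this over a countable family of product functions $v\otimes w$ that separates points and generates the cores, one obtains $\tilde{\mathcal F}^1\subset\mathcal F^1$, $\tilde{\mathcal F}^2\subset\mathcal F^2$, and $\tilde\mu\ll\mu$ as well as $\mu\ll\tilde\mu$ (the latter because sets of positive $\tilde\mu$-measure must be ``seen'' by the $\mu$-integral term in \eqref{EQFGMA} for $X$, else $\mathcal{E}(v\otimes w,v\otimes w)$ would drop below $\tilde{\mathcal E}(v\otimes w,v\otimes w)$). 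Then plugging $u=v\otimes w$ into both versions of \eqref{EQFGMA} and subtracting:
\[
\int_{E_1}\mathcal{E}^2(w,w)\,v(x)^2\,\mu(dx)-\int_{E_1}\tilde{\mathcal E}^2(w,w)\,v(x)^2\,\tilde\mu(dx)
=\int_{E_2}\bigl(\tilde{\mathcal E}^1(v,v)-\mathcal{E}^1(v,v)\bigr)w(y)^2\,m_2(dy).
\]
The right side is independent of the scale of $w$, while the left side, for fixed $v$, is an affine function of the pair $(\mathcal{E}^2(w,w),\tilde{\mathcal E}^2(w,w))$; varying $w$ (e.g.\ replacing $w$ by $tw$) and using $\mathcal{E}^1\ge\tilde{\mathcal E}^1$-type consistency pins down that there is a single constant $c>0$ with $\tilde\mu=c\mu$ and $\mathcal{E}^2(w,w)=c\,\tilde{\mathcal E}^2(w,w)$ for all $w\in\tilde{\mathcal F}^2\cap C_c(E_2)$, whence $\mathcal{E}^1(v,v)=\tilde{\mathcal E}^1(v,v)$ for all $v\in\tilde{\mathcal F}^1\cap C_c(E_1)$. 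Density of these cores and polarization upgrade these to $(\tilde{\mathcal E}^1,\tilde{\mathcal F}^1)\prec(\mathcal E^1,\mathcal F^1)$ and $(\tilde{\mathcal E}^2,\tilde{\mathcal F}^2)\prec_c(\mathcal E^2,\mathcal F^2)$.

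\medskip

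\emph{The last assertion and the main obstacle.} Given the equivalence, $(\tilde{\mathcal E},\tilde{\mathcal F})$ is proper iff $\tilde{\mathcal F}\subsetneq\mathcal F$; from \eqref{EQFGMA} and the core descriptions, $\tilde{\mathcal F}=\mathcal F$ forces $\tilde{\mathcal F}^1=\mathcal F^1$ and $\tilde{\mathcal F}^2=\mathcal F^2$ (take product test functions as above), and conversely if both component domains coincide then the explicit expression and the core $\mathcal C=\mathcal C_1\otimes\mathcal C_2$ (or the general core argument of Theorem~\ref{THM1}) give $\tilde{\mathcal F}=\mathcal F$. I expect the main obstacle to be the measure-theoretic bookkeeping in the necessity part: justifying that the section maps $y\mapsto u(\cdot,y)$ and $x\mapsto u(x,\cdot)$ really land in the smaller spaces $\tilde{\mathcal F}^1,\tilde{\mathcal F}^2$ for a.e.\ point (rather than merely in $\mathcal F^1,\mathcal F^2$), and simultaneously extracting mutual absolute continuity of $\mu$ and $\tilde\mu$ together with the single global constant $c$ — all of this requires choosing a good countable separating family of product functions and being careful that the exceptional $m_2$- and $\mu$- (resp.\ $\tilde\mu$-) null sets can be taken uniform over that family. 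The extension from the core to the full domain in the sufficiency direction is the other delicate point, since one must show a $\tilde{\mathcal E}_1$-Cauchy sequence in $\tilde{\mathcal F}\cap C_c(E)$ has its limit in $\mathcal F$ with the energies still matching; this follows from lower semicontinuity of $\mathcal E$ combined with the inequality $\mathcal{E}_1\le\tilde{\mathcal E}_1$ on the core, a standard regular-subspace argument.
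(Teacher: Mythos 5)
Your necessity direction has a genuine gap at its central step. After testing with $u=v\otimes w$ you arrive (correctly) at
\[
\mathcal{E}^1(v,v)\,\|w\|^2_{m_2}+\mathcal{E}^2(w,w)\int v^2\,d\mu
=\tilde{\mathcal{E}}^1(v,v)\,\|w\|^2_{m_2}+\tilde{\mathcal{E}}^2(w,w)\int v^2\,d\tilde{\mu},
\]
and you must conclude that the coefficient $\tilde{\mathcal{E}}^1(v,v)-\mathcal{E}^1(v,v)$ of $\|w\|^2_{m_2}$ vanishes. Your device of ``replacing $w$ by $tw$'' is vacuous: every term above is quadratic in $w$, so rescaling gives no new information; and the ``$\mathcal{E}^1\ge\tilde{\mathcal{E}}^1$-type consistency'' you appeal to is not available a priori --- it is part of what is being proved. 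The mechanism the paper uses, and which your argument is missing, is the \emph{strong locality} of $(\mathcal{E}^2,\mathcal{F}^2)$ and $(\tilde{\mathcal{E}}^2,\tilde{\mathcal{F}}^2)$: rearranging yields, for fixed $v$ with $a_v=\int v^2 d\mu>0$,
\[
\mathcal{E}^2(g,g)=\frac{\tilde{a}_v}{a_v}\,\tilde{\mathcal{E}}^2(g,g)+\frac{\tilde{b}_v-b_v}{a_v}\int g^2\,dm_2
\]
on a core of $\mathcal{E}^2$, and a nonzero multiple of $\int g^2\,dm_2$ would be a killing term in the Beurling--Deny decomposition of $\mathcal{E}^2$, which strong locality forbids; hence $\tilde{b}_v=b_v$, i.e.\ $\tilde{\mathcal{E}}^1=\mathcal{E}^1$ on $\tilde{\mathcal{C}}_1$. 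Only then does the constancy of $\tilde{a}_v/a_v$ (whence $\tilde{\mu}=c\mu$ by uniform density of $\tilde{\mathcal{C}}_1$ in $C_c(E_1)$) and $\mathcal{E}^2=c\,\tilde{\mathcal{E}}^2$ on $\tilde{\mathcal{C}}_2$ follow. Without this step the necessity proof does not close.

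There is also a secondary issue in your sufficiency direction: you use \eqref{EQUEF} as an if-and-only-if characterization of $\mathcal{F}\cap C_c(E)$ (``hence, by the characterization of $\mathcal{F}$ through \eqref{EQUEF}, $u\in\mathcal{F}$''), but when $\mu$ is merely Radon, Theorem~\ref{THM1} states \eqref{EQUEF}--\eqref{EQFGMA} only as \emph{necessary} conditions for $u\in\mathcal{F}\cap C_c(E)$; membership in $\mathcal{F}$ does not follow from them. The paper avoids this by first reducing to the case $\mu\le m_1$ via a time change with full quasi-support (which preserves the regular-subspace relation), so that $\mathcal{C}_1\otimes\mathcal{C}_2$ and $\tilde{\mathcal{C}}_1\otimes\tilde{\mathcal{C}}_2$ are genuine cores; it then verifies the equality of the two forms on product functions in the tensor core and passes to closures. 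Your final approximation argument (lower semicontinuity plus closedness) is fine once equality on a core is in hand, but you need this reduction, or some substitute, to obtain a core on which both forms are simultaneously computable.
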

\begin{proof}
First assume that there exists a constant $c>0$ such that \eqref{EQMEF} holds.  Let
\[
\tilde{\mathcal{C}}_1:=\tilde{\mathcal{F}}^1\cap C_c(E_1),\quad \tilde{\mathcal{C}}_2:=\tilde{\mathcal{F}}^1\cap C_c(E_2),\quad \tilde{\mathcal{C}}:=\tilde{\mathcal{C}}_1\otimes \tilde{\mathcal{C}}_2.
\]
Since the time-change transform relative to a PCAF with full quasi support  remains the inclusion relationship of Dirichlet spaces (Cf. \S2.2.2 of \cite{LY}), it follows from the proof of Theorem~1.3 of \cite{OH} that we only need to prove $(\tilde{\mathcal{E}},\tilde{\mathcal{F}})\prec (\mathcal{E},\mathcal{F})$ for the special case $\mu\leq m_1$. In particular, $\mathcal{C}$ is a core of $(\mathcal{E,F})$, and $\tilde{\mathcal{C}}$ is a core of $(\tilde{\mathcal{E}},\tilde{\mathcal{F}})$.

Take any two functions $u=f_1\otimes f_2,v=g_1\otimes g_2\in \tilde{\mathcal{C}}$, where $f_1,g_1\in \tilde{\mathcal{C}}_1$ and $f_2,g_2\in \tilde{\mathcal{C}}_2$. It follows from \eqref{EQMEF} and Theorem~\ref{THM1}  that
\[\begin{aligned}
	\tilde{\mathcal{E}}(u,v)&=\tilde{\mathcal{E}}^1(f_1,g_1)\int f_2\cdot g_2dm_2+\tilde{\mathcal{E}}^2(f_2,g_2)\int f_1\cdot g_1 d\tilde{\mu}\\
	&=\mathcal{E}^1(f_1,g_1)\int f_2\cdot g_2dm_2 +\frac{1}{c}\cdot \mathcal{E}^2(f_2,g_2)\cdot c\cdot \int f_1\cdot g_1 d\mu \\
	&=\mathcal{E}(u,v).
\end{aligned}\]
The above equality is also right for any $u,v\in \tilde{\mathcal{C}}$. Since $\tilde{\mathcal{C}}$ is a core of $(\tilde{\mathcal{E}},\tilde{\mathcal{F}})$, we can conclude that $\tilde{\mathcal{F}}\subset \mathcal{F}$ and $\mathcal{E}(u,v)=\tilde{\mathcal{E}}(u,v)$ for any $u,v\in \tilde{\mathcal{F}}$.

On the contrary, assume $(\tilde{\mathcal{E}},\tilde{\mathcal{F}})\prec (\mathcal{E,F})$.
 We assert that 
\begin{equation}\label{EQCC}
	\tilde{\mathcal{C}}_1\subset \mathcal{C}_1,\quad \tilde{\mathcal{C}}_2\subset \mathcal{C}_2.
\end{equation}
In fact, take two functions $f\in \tilde{\mathcal{C}}_1,g\in \tilde{\mathcal{C}}_2$, then $u:=f\otimes g\in \tilde{\mathcal{F}}\subset \mathcal{F}$. Clearly $u\in C_c(E)$. Hence
\begin{equation}\label{EQUC}
	u\in \mathcal{F}\cap C_c(E).
\end{equation}
It follows from \eqref{EQUEF} that $f\in \mathcal{F}^1,g\in\mathcal{F}^2$, and thus \eqref{EQCC} holds.

Next, we still take $f\in \tilde{\mathcal{C}}_1,g\in \tilde{\mathcal{C}}_2$ and let $u=f\otimes g$. It follows from Theorem~\ref{THM1} and $\mathcal{E}(u,u)=\tilde{\mathcal{E}}(u,u)$ that
\[
\begin{aligned}
	&\mathcal{E}^1(f,f)\cdot \int g^2dm_2+\mathcal{E}^2(g,g)\cdot\int f^2 d\mu\\=&\tilde{\mathcal{E}}^1(f,f)\cdot \int g^2dm_2+\tilde{\mathcal{E}}^2(g,g)\cdot\int f^2 d\tilde{\mu}.
\end{aligned}\]
Fix the function $f$. Denote $$a_f:=\int f^2d\mu,\quad \tilde{a}_f:=\int f^2d\tilde{\mu}$$ and $$b_f:=\mathcal{E}^1(f,f),\quad \tilde{b}_f:=\tilde{\mathcal{E}}^1(f,f).$$ Without loss of generality, assume $a_f,\tilde{a}_f>0$ and $b_f\leq \tilde{b}_f$. Then we can conclude that
\begin{equation}\label{EQEFG}
	\mathcal{E}^2(g,g)=\frac{\tilde{a}_f}{a_f}\cdot \tilde{\mathcal{E}}^2(g,g)+\frac{\tilde{b}_f-b_f}{a_f}\cdot \int g^2 dm_2
\end{equation}
for any $g\in \tilde{\mathcal{C}}_2$. Since $(\mathcal{E}^2,\mathcal{F}^2)$ and $(\tilde{\mathcal{E}}^2,\tilde{\mathcal{F}}^2)$ are strongly local, it follows that $\tilde{b}_f=b_f$. In other words, $\mathcal{E}^1(f,f)=\tilde{\mathcal{E}}^1(f,f)$ for any $f\in \tilde{\mathcal{C}}_1$. Therefore
\[
	(\tilde{\mathcal{E}}^1,\tilde{\mathcal{F}}^1)\prec (\mathcal{E}^1,\mathcal{F}^1).
\]
Moreover, since any $g\in \tilde{\mathcal{C}}_2$ satisfies \eqref{EQEFG}, we can deduce that there exists a constant $c>0$ such that for any $f\in \tilde{\mathcal{C}}_1$ with $a_f,\tilde{a}_f>0$, it holds that $\tilde{a}_f/a_f=c$, i.e.
\[
	\int f^2d\tilde{\mu}=c\cdot \int f^2d\mu.
\]
Since $\tilde{\mathcal{C}}_1$ is dense in $C_c(E_1)$, it follows that
\begin{equation}\label{EQMCM}
	\tilde{\mu}=c\cdot \mu.
\end{equation}
Consequently, $\mathcal{E}^2(g,g)=c\cdot \tilde{\mathcal{E}}^2(g,g)$ for any $g\in \tilde{\mathcal{C}}_2$. Then we obtain that
\[
	(\tilde{\mathcal{E}}^2,\tilde{\mathcal{F}}^2)\prec_c (\mathcal{E}^2,\mathcal{F}^2).
\]
The second assertion about the proper property of regular subspaces is obvious. That completes the proof.  
\end{proof}

\begin{remark}
	Note that if $X^1,\tilde{X}^1\in \mathcal{D}(E_1)$ and the associated Dirichlet form $(\tilde{\mathcal{E}}^1,\tilde{\mathcal{F}}^1)$ of $\tilde{X}^1$ is a regular subspace of associated Dirichlet form $(\mathcal{E}^1,\mathcal{F}^1)$ of $X^1$, then $\mu\in \text{S}_{E_1}(X^1)$ is always a smooth measure relative to $\tilde{X}^1$ but not vice versa (Cf. Lemma~2 of  \cite{LY}), i.e. 
	\[
		\text{S}_{E_1}(X^1)\subset \text{S}_{E_1}(\tilde{X}^1).
	\]
This is why \eqref{EQMCM} is reasonable in  Theorem~\ref{THM2}. However, the corresponding PCAFs of $\mu$ with respect to $X^1$ and $\tilde{X}^1$ may be different. 
\end{remark}

We want to illustrate that the constant $c$ in \eqref{EQMEF} is not essential. More precisely, we have the following corollary of Theorem~\ref{THM2}. 

\begin{corollary}
	Let the notations and conditions be the same as Theorem~\ref{THM2}. Then there is an equivalent representation of $\tilde{X}$, say $\tilde{X}=[Y^1,Y^2,\mu]$, such that 
	\[
		Y^1\prec X^1,\quad Y^2\prec X^2.
	\]
\end{corollary}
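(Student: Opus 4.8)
The plan is to start from the conclusion of Theorem~\ref{THM2}: since $(\tilde{\mathcal{E}},\tilde{\mathcal{F}})\prec(\mathcal{E},\mathcal{F})$, there is a constant $c>0$ with $\tilde{\mu}=c\cdot\mu$, $(\tilde{\mathcal{E}}^1,\tilde{\mathcal{F}}^1)\prec(\mathcal{E}^1,\mathcal{F}^1)$ and $(\tilde{\mathcal{E}}^2,\tilde{\mathcal{F}}^2)\prec_c(\mathcal{E}^2,\mathcal{F}^2)$. The idea is to absorb the scaling constant $c$ into the time clock. Concretely, I would set $Y^1:=\tilde{X}^1$ (so $(\tilde{\mathcal{E}}^1,\tilde{\mathcal{F}}^1)\prec(\mathcal{E}^1,\mathcal{F}^1)$ as required), and let $Y^2$ be the diffusion on $E_2$ associated with the Dirichlet form $(c\cdot\tilde{\mathcal{E}}^2,\tilde{\mathcal{F}}^2)$ on $L^2(E_2,m_2)$. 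Multiplying a regular strongly local Dirichlet form by a positive constant again yields a regular strongly local Dirichlet form on the same $L^2$-space with the same domain, so $Y^2\in\mathcal{D}(E_2)$, and since $c\cdot\tilde{\mathcal{E}}^2=\mathcal{E}^2$ on $\tilde{\mathcal{F}}^2\subset\mathcal{F}^2$ we get $Y^2\prec X^2$. Probabilistically, $Y^2$ is just $\tilde{X}^2$ run at $1/c$ times the speed, i.e. $Y^2_t=\tilde{X}^2_{t/c}$.

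The key remaining step is to check that the skew product $[Y^1,Y^2,\mu]$ really is an equivalent representation of $\tilde{X}=[\tilde{X}^1,\tilde{X}^2,\tilde{\mu}]=[\tilde{X}^1,\tilde{X}^2,c\mu]$ — that is, that replacing the Revuz measure $c\mu$ (for $\tilde X^2$) by $\mu$ (for $Y^2$) while slowing the second component down by the factor $c$ produces the very same Markov process. This I would do at the level of Dirichlet forms using Theorem~\ref{THM1}: one must verify that $\mu\in\mathrm{S}_{E_1}(Y^1)$, which holds because $\mu\in\mathrm{S}_{E_1}(X^1)=\mathrm{S}_{E_1}(\tilde X^1)\subset\mathrm{S}_{E_1}(Y^1)$ — indeed $Y^1=\tilde X^1$ — so that $[Y^1,Y^2,\mu]\in\mathcal{D}_{\mathrm{sp}}$, and then compare energy forms. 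By \eqref{EQFGMA} the Dirichlet form of $[Y^1,Y^2,\mu]$ has energy
\[
\int_{E_2}\mathcal{E}^1_{Y}(u(\cdot,y),u(\cdot,y))\,m_2(dy)+\int_{E_1}\big(c\cdot\tilde{\mathcal{E}}^2(u(x,\cdot),u(x,\cdot))\big)\mu(dx),
\]
where $\mathcal{E}^1_Y=\tilde{\mathcal{E}}^1$ is the form of $Y^1=\tilde X^1$; and the second integral equals $\int_{E_1}\tilde{\mathcal{E}}^2(u(x,\cdot),u(x,\cdot))\,(c\mu)(dx)=\int_{E_1}\tilde{\mathcal{E}}^2(u(x,\cdot),u(x,\cdot))\,\tilde\mu(dx)$, which is exactly the energy of $\tilde{X}=[\tilde X^1,\tilde X^2,\tilde\mu]$. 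Since the two forms also have the same domain (both characterised by \eqref{EQUEF} with the same $\mathcal{F}^1$-integrability and, for the second condition, the same measure $\tilde\mu=c\mu$ up to a positive constant, hence the same $L^2(E_1,\cdot\,;\mathcal{F}^2)$-class), they coincide, so $[Y^1,Y^2,\mu]$ and $\tilde X$ are associated with the same Dirichlet form and hence are the same process.

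I expect the main obstacle to be the bookkeeping around \emph{which} $L^2$-space and \emph{which} core are in play when I rescale: I must be careful that $(c\cdot\tilde{\mathcal{E}}^2,\tilde{\mathcal{F}}^2)$ is taken on $L^2(E_2,m_2)$ with the \emph{unchanged} reference measure $m_2$ (only the energy is rescaled, not the $L^2$-inner product), and that the domain condition in \eqref{EQUEF} for the second factor depends on $\tilde\mu$ and $\mathcal{F}^2=\tilde{\mathcal{F}}^2$ but is insensitive to multiplying $\tilde{\mathcal{E}}^2$ by a positive constant — so the two skew-product forms genuinely have identical domains, not merely identical energies on a common core. Once this is pinned down, the verification $Y^1\prec X^1$, $Y^2\prec X^2$ is immediate from the definitions and from the fact that scaling a Dirichlet form by a constant $c$ with $c\cdot\tilde{\mathcal{E}}^2=\mathcal{E}^2$ on $\tilde{\mathcal{F}}^2$ is precisely the relation $\tilde{\mathcal{E}}^2\prec_c\mathcal{E}^2$ rewritten as $(c\tilde{\mathcal{E}}^2,\tilde{\mathcal{F}}^2)\prec(\mathcal{E}^2,\mathcal{F}^2)$.
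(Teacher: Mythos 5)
Your proposal is correct and takes essentially the same route as the paper: the paper also absorbs the constant $c$ into a deterministic rescaling of the second factor, setting $Y^1:=\tilde{X}^1$ and $Y^2:=\tilde{X}^{2,c}$ with $\tilde{X}^{2,c}_t:=\tilde{X}^2_{ct}$ (whose Dirichlet form is precisely your $(c\cdot\tilde{\mathcal{E}}^2,\tilde{\mathcal{F}}^2)$), and then identifies $[\tilde{X}^1,\tilde{X}^2,c\mu]=[\tilde{X}^1,\tilde{X}^{2,c},\mu]$ pathwise via the observation that the PCAF of $c\mu$ relative to $\tilde{X}^1$ is $c\tilde{A}$, which sidesteps the domain-matching bookkeeping you carry out at the level of \eqref{EQUEF}--\eqref{EQFGMA}. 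One small correction to your probabilistic aside: the diffusion associated with $c\cdot\tilde{\mathcal{E}}^2$ is $\tilde{X}^2$ \emph{sped up} by the factor $c$, i.e.\ $Y^2_t=\tilde{X}^2_{ct}$, not $\tilde{X}^2_{t/c}$; only with this orientation does running $Y^2$ with the clock $\tilde{A}$ reproduce $\tilde{X}^2_{c\tilde{A}_t}$, the second component of $\tilde{X}$.
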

\begin{proof}
Let $(\tilde{A}_t)_{t\geq 0}$ be the associated PCAF of $\mu$ with respect to $\tilde{X}^1$, and define 
\[
	\tilde{X}^{2,c}_t:=\tilde{X}^2_{ct},\quad  t\geq 0.
\]
Then the associated PCAF of $\tilde{\mu}=c\cdot \mu$ with respect to $\tilde{X}^1$ is exactly $(c\cdot \tilde{A}_t)_{t\geq 0}$. Furthermore,
\[
	[\tilde{X}^1,\tilde{X}^2,c\cdot \mu]=(\tilde{X}^1_t,\tilde{X}^2_{c \tilde{A}_t})_{t\geq 0}=(\tilde{X}^1_t,\tilde{X}^{2,c}_{\tilde{A}_t})_{t\geq 0}=[\tilde{X}^1,\tilde{X}^{2,c},\mu].
\] 
Clearly we can conclude $\tilde{X}^{2,c}\prec X^2$. Set $Y^1:=\tilde{X}^1$ and $Y^2:=\tilde{X}^{2,c}$. That completes the proof. 
\end{proof}

Now we are going to explore the regular subspaces of rotationally invariant diffusions, which belong to $\mathcal{R}$ for a fixed natural number $d\geq 2$. 
The following corollary is a direct result of Theorem~\ref{THM2}. It indicates that we can reduce the problems about regular subspaces of rotationally invariant diffusions to those of their radius parts, i.e. minimal diffusions on $(0,\infty)$, which have been considered in \cite{XPJ}.

\begin{corollary}\label{COR3}
 Let $X=[r,\mu]$ and $X'=[r',\mu']$ be in $ \mathcal{R}$. Then $X'\prec X$ if and only if $r'\prec r$ and $\mu=\mu'$.
\end{corollary}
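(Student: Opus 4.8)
The plan is to derive Corollary~\ref{COR3} directly from Theorem~\ref{THM2} by identifying the spherical component of every member of $\mathcal{R}$. First I would observe that for a rotationally invariant diffusion $X=[r,\mu]\in\mathcal{R}$, the skew-product decomposition has $E_1=(0,\infty)$, $E_2=S^{d-1}$, $X^1=r$ a minimal diffusion, and $X^2=\vartheta$ the \emph{same} spherical Brownian motion (corresponding to $\Delta_{S^{d-1}}$) for \emph{every} such $X$; this is forced by the definition of $\mathcal{R}$ and by the structure theorem for isotropic diffusions on $\mathbf{R}^d\setminus\{0\}$ (cf. \eqref{EQRI} and the discussion of Galmarino's result). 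Likewise $X'=[r',\mu']$ has spherical component $\vartheta$. So in the notation of Theorem~\ref{THM2} we have $X^2=\tilde{X}^2=\vartheta$, with associated Dirichlet form $(\mathcal{E}^{S^{d-1}},\mathcal{F}^{S^{d-1}})$ on $L^2(S^{d-1},\sigma)$ where $\sigma$ is the (normalized) surface measure.

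Next I would apply Theorem~\ref{THM2}: $X'\prec X$ holds if and only if there is a constant $c>0$ with $\mu'=c\cdot\mu$, $r'\prec r$, and $(\mathcal{E}^{S^{d-1}},\mathcal{F}^{S^{d-1}})\prec_c(\mathcal{E}^{S^{d-1}},\mathcal{F}^{S^{d-1}})$. The key point is then that the last condition forces $c=1$: a Dirichlet form can only be a regular subspace of itself up to the constant $c$ when $c=1$. Indeed, if $(\mathcal{E}',\mathcal{F}')\prec_c(\mathcal{E},\mathcal{F})$ with $(\mathcal{E}',\mathcal{F}')=(\mathcal{E},\mathcal{F})$, then for every $u\in\mathcal{F}$ we have $\mathcal{E}(u,u)=c\cdot\mathcal{E}(u,u)$, and choosing any $u\in\mathcal{F}$ with $\mathcal{E}(u,u)>0$ (which exists since $\vartheta$ is nontrivial — e.g. any nonconstant smooth function on $S^{d-1}$) gives $c=1$. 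Hence $\mu'=\mu$, and the three conditions of \eqref{EQMEF} collapse to exactly $r'\prec r$ and $\mu=\mu'$. Conversely, if $r'\prec r$ and $\mu=\mu'$, then \eqref{EQMEF} holds with $c=1$ (using $\vartheta\prec\vartheta$ trivially), so Theorem~\ref{THM2} gives $X'\prec X$.

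I do not expect any serious obstacle here, since this is essentially a specialization of Theorem~\ref{THM2}. The one point that deserves care — and which I would spell out — is the justification that both $X$ and $X'$ genuinely have the \emph{identical} spherical component $\vartheta$, so that the factor $(\tilde{\mathcal{E}}^2,\tilde{\mathcal{F}}^2)\prec_c(\mathcal{E}^2,\mathcal{F}^2)$ in Theorem~\ref{THM2} is a self-subspace relation and the rigidity argument forcing $c=1$ applies. This is built into the definition of the class $\mathcal{R}$ (where only $r$ and $\mu$ vary while the rotationally invariant structure — hence the spherical Laplacian — is fixed), so once that is noted the proof is a two-line invocation of Theorem~\ref{THM2} together with the elementary observation that $\prec_c$ with $c\neq 1$ is incompatible with a nontrivial Dirichlet form being a subspace of itself.
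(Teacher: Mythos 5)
Your proof is correct and follows exactly the route the paper intends: the paper states Corollary~\ref{COR3} without proof as ``a direct result of Theorem~\ref{THM2}'', and your specialization --- noting that every member of $\mathcal{R}$ has the same spherical component $\vartheta$, so the condition $(\tilde{\mathcal{E}}^2,\tilde{\mathcal{F}}^2)\prec_c(\mathcal{E}^2,\mathcal{F}^2)$ becomes a self-subspace relation forcing $c=1$ and hence $\mu'=\mu$ --- is precisely the argument being invoked. Your explicit justification that $c=1$ (via a nonconstant smooth function on $S^{d-1}$ with positive energy) is a worthwhile detail the paper leaves implicit.
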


The next remark contains some general conclusions about global properties of regular subspaces, which can be found in \cite{FMG} \cite{LY2} and \cite{LY}. After these notes, we shall discuss the  regular subspaces of skew product diffusions  outlined in Example~\ref{EXA1} and \ref{EXA2} and analyse their global properties.

\begin{remark}\label{RM4}
Fix two regular and strongly local Dirichlet forms, say $(\mathcal{E,F})$ and  $(\mathcal{E}',\mathcal{F}')$, on a Hilbert space $L^2(E,m)$, and assume that $$(\mathcal{E}',\mathcal{F}')\prec (\mathcal{E,F}).$$ It follows from Theorem~4.6.4 of  \cite{FU} and Remark~1 of  \cite{LY} that
\begin{itemize}
\item[(i)] if $(\mathcal{E,F})$ is irreducible, then $(\mathcal{E}',\mathcal{F}')$ is also irreducible. 
\end{itemize}
Furthermore, from Theorem~1.6.4 of  \cite{FU}, we have
\begin{itemize}
\item[(ii)] if $(\mathcal{E,F})$ is transient, then $(\mathcal{E}',\mathcal{F}')$ is also transient; 
\item[(iii)] if $(\mathcal{E}',\mathcal{F}')$ is recurrent, then $(\mathcal{E,F})$ is also recurrent.
\end{itemize}
In other words, if $(\mathcal{E,F})$ is irreducible and transient, then its regular subspaces must be irreducible and transient. If $(\mathcal{E,F})$ is irreducible and recurrent, then its regular subspaces may be irreducible and recurrent, or irreducible and transient. 
\end{remark}

\begin{example}\label{EXA3}
Fix an integer $d\geq 2$. As outlined in Example~\ref{EXA1}, the $d$-dimensional Brownian motion on $\mathbf{R}^d\setminus \{\mathbf{0}\}$ can be written as 
\[
	B=(r_t,\vartheta_{A_t})_{t\geq 0}
\]
where $r$ is the Bessel process on $(0,\infty)$ with the scale function $p$ given in Example~\ref{EXA1} and speed measure $x^{d-1}dx$, $\vartheta$ is the spherical Brownian motion on $S^{d-1}$ and 
\[
	A_t=\int_0^t \frac{1}{r^2_s}ds,\quad t\geq 0.
\]
Note that the associated Revuz measure of $A$ with respect to the speed measure of $r$ is $$\mu(dx)=x^{d-3}dx,$$ which is Radon on $(0,\infty)$ with full quasi support. Thus any $B'\in \mathcal{R}$ with $B'\prec B$ can be written as 
\[
	B'=(r'_t,\vartheta_{A'_t})_{t\geq 0},
\]
where $r'$ is a diffusion on $(0,\infty)$ with speed measure $x^{d-1}dx$ and scale function $p'$ such that $p'$ is absolutely continuous with respect to $p$,
\[
\frac{dp'}{dp}= 0\text{ or }1,\text{ a.e., (Cf. \cite{XPJ} \cite{LY2} and \cite{LY})},
\] 
 and
\[
	A'_t=\int_0^t \frac{1}{{r'}_s^2}ds.
\]
When the Lebesgue measure of $\{x\in (0,\infty):dp'/dp= 0\}$ is positive, then $B'$ is a proper regular subspace of Brownian motion $B$ on $L^2(\mathbf{R}^d\setminus\{0\})$. This gives another method to characterize the regular subspaces of high-dimensional Brownian motions, which have already been considered through direct product method in  \cite{LY}. 

When $d=2$, $r$ and $B$ are both recurrent.  In particular,  $0$ and $\infty$ are both unapproachable boundary points of $r$. Note that $l((0,c))<\infty$ and $l((c,\infty))=\infty$ for any constant $c\in (0,\infty)$. If $p'(0+)>-\infty$, equivalently $0$ is a regular boundary of $r'$, it follows from Remark~\ref{RM3} and Proposition~\ref{COR3} that $B'$ is non-conservative and transient. If $p'(0+)=-\infty$ and $p'(\infty)=\infty$, then $r'$ is recurrent, and hence $B'$ is also recurrent. If $p'(0+)=-\infty, p'(\infty)<\infty$ and $\int_c^\infty x^2p'(dx)<\infty$ for some constant $c\in (0,\infty)$, then $\infty$ is an approachable boundary point in finite time of $r'$, and $r'$ is non-conservative and transient. Hence $B'$ is also non-conservative and transient. Otherwise, i.e. $p'(0+)=-\infty, p'(\infty)<\infty$ but $\int_c^\infty x^2p'(dx)=\infty$ for some constant $c\in (0,\infty)$, $r'$ is conservative and transient. Clearly $B'$ is also conservative and transient.

When $d\geq 3$ the $d$-dimensional Brownian motion is always transient. Thus $B'$ is also transient by Remark~\ref{RM4}. Neither $0$ nor $\infty$ is approachable in finite time relative to $r$, and it implies that $r$ and $B$ are both conservative.  The conservativeness of $B'\prec B$ in $\mathcal{R}$ can be classified similarly to the case $d=2$. 
\end{example}

\begin{example}\label{EXA4}
We can also write down all rotation-invariant-type regular subspaces of $X^0$ outlined in Example~\ref{EXA2}. Note that the radius part $r^0$ of $X^0$ is transient since $0$ is a regular boudary point of $r^0$.  It follows from Proposition~\ref{COR5} and Remark~\ref{RM4} that any regular subspace of $(\mathcal{E}^0,\mathcal{F}^0)$ is transient. Moreover, since $0$ is a regular boundary point of $r$ (hence also of any regular subspace $r'\prec r$), we can conclude that $r$ (as well as $r'$) is non-conservative. Thus $X^0$ and any regular subspace $\tilde{X}^0\prec X^0$ in $\mathcal{R}$ are both non-conservative. 
\end{example}

The reason why we only discuss the Brownian motion as well as other rotationally invariant diffusions on $\mathbf{R}^d\setminus \{\mathbf{0}\}$ (not on $\mathbf{R}^d$) is that in the example of  Brownian motions, the smooth measure $$\mu(dx)=x^{d-3}dx$$ is not Radon on $[0,\infty)$ when $d=2$. Similarly the smooth measure $$l(dx)/x^2$$ in Example~\ref{EXA2} is also Radon on $(0,\infty)$ but not on $[0,\infty)$. Thus the expressions of Dirichlet forms associated with skew product diffusions outlined in Theorem~\ref{THM1} are not valid for them any more. Especially, even though they correspond to the same Brownian motion or rotationally invariant diffusion, the regular conceptions of Dirichlet forms on $\mathbf{R}^d$ and $\mathbf{R}^d\setminus \{0\}$ are different. 

\section{The regular extensions of rotation invaritant diffusions}\label{RID}

In this section, we shall construct the regular extensions of rotationally invariant diffusions from $\mathbf{R}^{d}\setminus \{0\}$ to $\mathbf{R}^d$ to overcome the difficulty appeared in the end of \S\ref{RES}. First we need to introduce the definition of regular extension.

 Let $m$ be a Radon measure on $\mathbf{R}^d$, which is absolutely continuous with respect to the Lebesgue measure. Further let $(\mathcal{E,F})$ be a regular Dirichlet form on $L^2(\mathbf{R}^d,m)$. Its part Dirichlet form on $\mathbf{R}^d\setminus \{0\}$ is denoted by $(\mathcal{E}^0,\mathcal{F}^0)$. Clearly $(\mathcal{E}^0,\mathcal{F}^0)$ is given by 
\begin{equation}\label{EQEFZ}
\begin{aligned}
	&\mathcal{F}^0:=\{u\in \mathcal{F}:\tilde{u}(0)=0,\quad \mathcal{E}\text{-q.e.}\},\\
	&\mathcal{E}^0(u,v):=\mathcal{E}(u,v),\quad u,v\in \mathcal{F}^0.
\end{aligned}
\end{equation} 
The following fact is trivial from \eqref{EQEFZ} and Remark~1 of  \cite{LY}, so we omit its proof.

\begin{lemma}\label{LM3}
	If $(\tilde{\mathcal{E}},\tilde{\mathcal{F}})\prec (\mathcal{E,F})$, then the part Dirichlet form $(\tilde{\mathcal{E}}^0,\tilde{\mathcal{F}}^0)$ on $\mathbf{R}^d\setminus \{\mathbf{0}\}$ of $(\tilde{\mathcal{E}},\tilde{\mathcal{F}})$ is a regular subspace of $(\mathcal{E}^0,\mathcal{F}^0)$, i.e. $(\tilde{\mathcal{E}}^0,\tilde{\mathcal{F}}^0)\prec (\mathcal{E}^0,\mathcal{F}^0)$.
\end{lemma}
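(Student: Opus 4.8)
The plan is to verify directly that $(\tilde{\mathcal{E}}^0,\tilde{\mathcal{F}}^0)$ satisfies the defining conditions of a regular subspace of $(\mathcal{E}^0,\mathcal{F}^0)$, namely $\tilde{\mathcal{F}}^0\subset\mathcal{F}^0$ together with agreement of the forms on $\tilde{\mathcal{F}}^0$, and that $(\tilde{\mathcal{E}}^0,\tilde{\mathcal{F}}^0)$ is itself regular on $L^2(\mathbf{R}^d\setminus\{0\},m)$. By definition $(\tilde{\mathcal{E}}^0,\tilde{\mathcal{F}}^0)$ is the part form of $(\tilde{\mathcal{E}},\tilde{\mathcal{F}})$ on $\mathbf{R}^d\setminus\{0\}$, so $\tilde{\mathcal{F}}^0=\{u\in\tilde{\mathcal{F}}:\tilde{u}(0)=0,\ \tilde{\mathcal{E}}\text{-q.e.}\}$ and $\tilde{\mathcal{E}}^0=\tilde{\mathcal{E}}$ on this domain. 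The regularity of part Dirichlet forms on an open set is standard (Theorem~4.4.3 of \cite{FU}), so the only real content is the inclusion $\tilde{\mathcal{F}}^0\subset\mathcal{F}^0$; once that is established the identity $\mathcal{E}^0(u,v)=\tilde{\mathcal{E}}^0(u,v)$ for $u,v\in\tilde{\mathcal{F}}^0$ is immediate from $\tilde{\mathcal{F}}^0\subset\tilde{\mathcal{F}}\subset\mathcal{F}$ and $\mathcal{E}=\tilde{\mathcal{E}}$ on $\tilde{\mathcal{F}}$.

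The first step is to fix $u\in\tilde{\mathcal{F}}^0$. Then $u\in\tilde{\mathcal{F}}\subset\mathcal{F}$, so certainly $u\in\mathcal{F}$; what remains is to show $\tilde{u}(0)=0$ holding $\mathcal{E}$-q.e., i.e. with respect to the capacity of the \emph{larger} form. The natural route is to compare the two capacities. Since $(\tilde{\mathcal{E}},\tilde{\mathcal{F}})\prec(\mathcal{E},\mathcal{F})$, Remark~1 of \cite{LY} — invoked by the authors — gives that $\mathcal{E}$-exceptional sets are $\tilde{\mathcal{E}}$-exceptional, or more precisely that the two forms share the same exceptional sets / quasi-notions in the relevant sense. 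Concretely, if $N$ is $\mathcal{E}$-polar then $N$ is $\tilde{\mathcal{E}}$-polar, and conversely one must check that the $\tilde{\mathcal{E}}$-q.e. statement $\tilde{u}(0)=0$ upgrades to an $\mathcal{E}$-q.e. statement. The cleanest way is: the singleton $\{0\}$ is either $\mathcal{E}$-polar or not. If $\{0\}$ is $\mathcal{E}$-polar, then the condition "$\tilde{u}(0)=0$ $\mathcal{E}$-q.e." is vacuous and $\mathcal{F}^0=\mathcal{F}$, $\tilde{\mathcal{F}}^0=\tilde{\mathcal{F}}$, so the inclusion is trivial. If $\{0\}$ is not $\mathcal{E}$-polar, then (since $\mathcal{E}$-polar sets are $\tilde{\mathcal{E}}$-polar) $\{0\}$ is not $\tilde{\mathcal{E}}$-polar either; the $\mathcal{E}$-quasi-continuous version $\tilde{u}$ and the $\tilde{\mathcal{E}}$-quasi-continuous version of $u$ agree $\mathcal{E}$-q.e.\ (both being $\mathcal{E}$-q.c.\ refinements of the same $L^2$ class, using that $\tilde{\mathcal{E}}$-q.c.\ implies $\mathcal{E}$-q.c.\ when exceptional sets are preserved), and evaluating at the non-polar point $\{0\}$ gives $\tilde{u}(0)=0$. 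Either way $u\in\mathcal{F}^0$.

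The step I expect to be the main obstacle — though the authors dismiss it as ``trivial'' — is the bookkeeping around quasi-continuous versions and the two capacities: making sure that ``$\tilde{u}(0)=0$ $\tilde{\mathcal{E}}$-q.e.'' really does imply ``$\tilde{u}(0)=0$ $\mathcal{E}$-q.e.'' for the \emph{same} representative, which needs that the $\mathcal{E}$-q.c.\ version and $\tilde{\mathcal{E}}$-q.c.\ version of $u$ coincide outside an $\mathcal{E}$-exceptional set. This in turn rests on Remark~1 of \cite{LY}, which asserts that the family of exceptional sets is unchanged under passing to a regular subspace; granting that, the identification of versions is routine. After these points the proof closes: $\tilde{\mathcal{F}}^0\subset\mathcal{F}^0$, $\mathcal{E}^0=\tilde{\mathcal{E}}^0$ on $\tilde{\mathcal{F}}^0$, and $(\tilde{\mathcal{E}}^0,\tilde{\mathcal{F}}^0)$ is regular on $L^2(\mathbf{R}^d\setminus\{0\},m)$ by the general theory of part forms, hence $(\tilde{\mathcal{E}}^0,\tilde{\mathcal{F}}^0)\prec(\mathcal{E}^0,\mathcal{F}^0)$.
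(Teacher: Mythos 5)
Your proposal is essentially the argument the paper has in mind: the authors omit the proof, declaring it ``trivial from \eqref{EQEFZ} and Remark~1 of \cite{LY}'', and what you have written is precisely the fleshed-out version of that — regularity of the part form from Theorem~4.4.3 of \cite{FU}, agreement of the forms from $\tilde{\mathcal{F}}^0\subset\tilde{\mathcal{F}}\subset\mathcal{F}$, and the only real content being the transfer of the condition $\tilde{u}(0)=0$ from $\tilde{\mathcal{E}}$-q.e.\ to $\mathcal{E}$-q.e.

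One directional slip is worth fixing: you invoke ``$\mathcal{E}$-polar sets are $\tilde{\mathcal{E}}$-polar'' to conclude that $\{0\}$ not $\mathcal{E}$-polar forces $\{0\}$ not $\tilde{\mathcal{E}}$-polar, but that inference is the contrapositive of the \emph{reverse} implication. The implication you actually need — $\tilde{\mathcal{E}}$-polar implies $\mathcal{E}$-polar — is the elementary one: since $\tilde{\mathcal{F}}\subset\mathcal{F}$ and $\tilde{\mathcal{E}}_1=\mathcal{E}_1$ on $\tilde{\mathcal{F}}$, the infimum defining the capacity of the subspace runs over a smaller class, so $\mathrm{Cap}_{\tilde{\mathcal{E}}}\geq\mathrm{Cap}_{\mathcal{E}}$; hence $\tilde{\mathcal{E}}$-polar sets are $\mathcal{E}$-polar, $\tilde{\mathcal{E}}$-nests are $\mathcal{E}$-nests, and $\tilde{\mathcal{E}}$-quasi-continuous versions are $\mathcal{E}$-quasi-continuous. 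With that corrected, your case analysis (on whether $\{0\}$ is $\mathcal{E}$-polar) and the identification of the two quasi-continuous versions at the non-polar point $0$ go through exactly as you describe, and the lemma follows.
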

What we are concerned is the inverse question of Lemma~\ref{LM3}. More precisely, for a fixed regular subspace $(\tilde{\mathcal{E}}^0,\tilde{\mathcal{F}}^0)\prec (\mathcal{E}^0,\mathcal{F}^0)$ on $L^2(\mathbf{R}^d\setminus \{\mathbf{0}\},m)$, whether there exists a (unique) regular subspace $(\tilde{\mathcal{E}},\tilde{\mathcal{F}})\prec (\mathcal{E,F})$ on $L^2(\mathbf{R}^d,m)$, whose part Dirichlet form on $\mathbf{R}^d\setminus \{\mathbf{0}\}$ is exactly $(\tilde{\mathcal{E}}^0,\tilde{\mathcal{F}}^0)$. If such a Dirichlet form $(\tilde{\mathcal{E}},\tilde{\mathcal{F}})$ exists, we call it the regular extension of $(\tilde{\mathcal{E}}^0,\tilde{\mathcal{F}}^0)$ on $\mathbf{R}^d$.

We shall especially consider the weighted Sobolev spaces, which were heavily studied by many authors, see \cite{AHS3}, \cite{MF2}, \cite{RW}, \cite{MZ}, \cite{MZ2} and \cite{WAD}.
Fixing a natural number $d\geq 2$, let $\rho$ be a positive and measurable function on $\mathbf{R}^d$ satisfying
\begin{equation}\label{EQRHO}
\rho\in L^1_{\text{loc}}(\mathbf{R}^d),\quad \frac{1}{\rho}\in L^1_{\text{loc}}(\mathbf{R}^d).
\end{equation}
Consider the following energy form on $L^2(\mathbf{R}^d,\rho dx)$ induced by $\rho$:
\begin{equation}
\begin{aligned}
	&\mathcal{F}:=\big\{u\in L^2(\mathbf{R}^d,\rho dx):\nabla u\in L^2(\mathbf{R}^d,\rho dx)\big \},\\
	&\mathcal{E}(u,v):=\frac{1}{2}\int_{\mathbf{R}^d}(\nabla u\cdot \nabla v)(x) \rho(x)dx,\quad u,v\in \mathcal{F},
\end{aligned}
\end{equation}
where $\nabla u$ is the gradient of $u$ in sense of weak distribution. It follows from \eqref{EQRHO} that $(\mathcal{E,F})$ is a Dirichlet form on $L^2(\mathbf{R}^d,\rho dx)$ (Cf.  \cite{KO}), and clearly 
\[
	C_c^\infty(\mathbf{R}^d)\subset \mathcal{F}.
\]
We will always make the following hypothesis:
\begin{itemize}
\item[(\bf{H})] The Dirichlet form $(\mathcal{E,F})$ is regular on $L^2(\mathbf{R}^d,\rho dx)$, and $C_c^\infty(\mathbf{R}^d)$ is a core of $(\mathcal{E,F})$. Furthermore, there exists a positive measurable function $\hat{\rho}$ on $[0,\infty)$ such that 
\[
\rho(x)=\hat{\rho}(|x|)
\]
for any $x\in \mathbf{R}^d$. In other words, $\rho$ is a radius function on $\mathbf{R}^d$.
\end{itemize}
For example, when $\rho\equiv 1$, then $(\mathcal{E,F})$ is exactly the associated Dirichlet form $(\frac{1}{2}\mathbf{D},H^1(\mathbf{R}^d))$ of $d$-dimensional Brownian motion. Apparently (\textbf{H}) is satisfied. The energy form outlined in Example~\ref{EXA2}, i.e. $d=3,\rho=\psi_\gamma^2$, where $\psi_\gamma$ is given by \eqref{EQPHI}, also satisfies (\textbf{H}). However, (\textbf{H}) is not always right. We refer some counterexamples, as well as some sufficient conditions to ensure the denseness of $C_c^\infty(\mathbf{R})$ in $\mathcal{F}$, to \cite{KO} \cite{KA} \cite{ZV} and \cite{TBO}. 

Denote the corresponding diffusion of $(\mathcal{E,F})$ by $X$. The part process $X^0$ on $\mathbf{R}^d\setminus \{\mathbf{0}\}$ of $X$ corresponds to a regular Dirichlet form $(\mathcal{E}^0,\mathcal{F}^0)$ on $L^2(\mathbf{R}^d\setminus \{\mathbf{0}\},\rho dx)$. 
In particular, $C_c^\infty(\mathbf{R}^d\setminus \{\mathbf{0}\})$ is a core of $(\mathcal{E}^0,\mathcal{F}^0)$. Note that if $\{\mathbf{0}\}$ is $\rho dx$-polar with respect to $X$ (such as the $d$-dimensional Brownian motion), then  $$(\mathcal{E}^0,\mathcal{F}^0)=(\mathcal{E,F}).$$ Otherwise, $\mathcal{F}^0 \neq \mathcal{F}$ (such as Example~\ref{EXA2}).

\begin{lemma}\label{LM4}
	Let $(\mathcal{E}^{(p)},\mathcal{F}^{(p)})$ be the associated Dirichlet form of minimal diffusion $r^{(p)}$ on $(0,\infty)$, whose scaling function and speed measure are $p$ and $l$. Assume that
	\begin{equation}\label{EQCII}
		C_c^\infty((0,\infty))\subset \mathcal{F}^{(p)}.
	\end{equation}
Then $C_c^\infty((0,\infty))$ is a core of $(\mathcal{E}^{(p)},\mathcal{F}^{(p)})$ if and only if $p$ is absolutely continuous.
\end{lemma}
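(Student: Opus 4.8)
The plan is to prove Lemma~\ref{LM4} by relating the one-dimensional diffusion $r^{(p)}$ with scale $p$ and speed measure $l$ to a copy of one-dimensional Brownian motion via the homeomorphism induced by $p$, and then invoking the known characterisation of regular subspaces of one-dimensional Brownian motion in terms of absolutely continuous scale functions. Concretely, the Dirichlet form $(\mathcal{E}^{(p)},\mathcal{F}^{(p)})$ has the classical representation
\[
\mathcal{F}^{(p)}=\Big\{u\in L^2((0,\infty),l):u\ll p,\ \frac{du}{dp}\in L^2((0,\infty),dp)\Big\},\qquad
\mathcal{E}^{(p)}(u,v)=\frac12\int_{(0,\infty)}\frac{du}{dp}\frac{dv}{dp}\,dp,
\]
so membership $C_c^\infty((0,\infty))\subset\mathcal{F}^{(p)}$ already forces every smooth compactly supported function to be $p$-absolutely continuous with $L^2(dp)$ derivative; this is where hypothesis \eqref{EQCII} is used. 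The key point is then the dichotomy: $C_c^\infty((0,\infty))$ is $\mathcal{E}^{(p)}_1$-dense in $\mathcal{F}^{(p)}$ exactly when no ``hidden'' directions are lost, and that happens precisely when $p$ is absolutely continuous (with respect to Lebesgue measure).

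For the ``if'' direction I would argue as follows. Suppose $p$ is absolutely continuous, so $p'$ exists a.e. and, because $p$ is a strictly increasing homeomorphism of $(0,\infty)$, we have $p'>0$ a.e. and $1/p'\in L^1_{\mathrm{loc}}$ (the latter since $\int 1/p'\,dp = $ Lebesgue length, locally finite). Then for $u\in\mathcal{F}^{(p)}$ the function $u$ is locally absolutely continuous in the ordinary sense with $u'=\frac{du}{dp}\cdot p'$ a.e., and $\mathcal{E}^{(p)}(u,u)=\frac12\int (u')^2/p'\,dx$. Thus $(\mathcal{E}^{(p)},\mathcal{F}^{(p)})$ is a one-dimensional \emph{strongly local, local} Dirichlet form of the classical $\frac12\int (u')^2\rho\,dx$ type with weight $\rho=1/p'$ on $(0,\infty)$, and one can mollify: given $u\in\mathcal{F}^{(p)}$ with compact support in $(0,\infty)$, standard convolution with a smooth kernel produces $u_\varepsilon\in C_c^\infty((0,\infty))$ with $u_\varepsilon\to u$ in $L^2(l)$ and $u_\varepsilon'\to u'$ in $L^2_{\mathrm{loc}}(dx)$; combined with a cutoff argument reducing a general $u\in\mathcal{F}^{(p)}$ to compactly supported ones (using strong locality and $\mathcal{E}^{(p)}_1$-boundedness of the cutoffs), this gives $\mathcal{E}^{(p)}_1$-density. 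Density of $C_c^\infty((0,\infty))$ in $C_c((0,\infty))$ with uniform norm is classical. Hence $C_c^\infty((0,\infty))$ is a core.

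For the ``only if'' (contrapositive) direction, suppose $p$ is not absolutely continuous. Write $p=p_{\mathrm{ac}}+p_{\mathrm{s}}$ for its Lebesgue decomposition, with $p_{\mathrm{s}}$ the singular part, nonzero by assumption. I would exhibit a function in $\mathcal{F}^{(p)}$ that cannot be approximated in $\mathcal{E}^{(p)}_1$-norm by $C_c^\infty((0,\infty))$: any $g\in C_c^\infty((0,\infty))$, once restricted to be in $\mathcal{F}^{(p)}$, has $\frac{dg}{dp}=g'/p'$ which vanishes $dp_{\mathrm{s}}$-a.e. (since $g'$ is a bounded Lebesgue density while $p_{\mathrm{s}}\perp dx$), so the energy of any $C_c^\infty$ function ``sees'' only the $p_{\mathrm{ac}}$ part of $dp$. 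Concretely, take a function $u$ which, as a function of the scale variable, behaves like the identity on a scale-interval where $p_{\mathrm{s}}$ charges mass: then $\frac{du}{dp}\equiv 1$ there, so $u$ picks up energy from $dp_{\mathrm{s}}$, while every smooth approximant has zero $dp_{\mathrm{s}}$-energy there; an $\mathcal{E}^{(p)}_1$-limit of smooth functions would have to have $dp_{\mathrm{s}}$-essentially vanishing derivative, contradicting $\frac{du}{dp}\equiv1$ on that set. This is essentially the obstruction identified in \cite{XPJ}, \cite{LY2}, \cite{LY}: the $C_c^\infty$-closure yields the strictly smaller Dirichlet space with scale $p_{\mathrm{ac}}$, which is a proper regular subspace. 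I expect the main technical obstacle to be the ``only if'' direction, specifically making rigorous that the $\mathcal{E}^{(p)}_1$-closure of $C_c^\infty((0,\infty))$ is exactly the Dirichlet space associated with the absolutely continuous part $p_{\mathrm{ac}}$ (so that one must produce a genuine element of $\mathcal{F}^{(p)}\setminus\overline{C_c^\infty}$), which requires the structural result on one-dimensional diffusions from \cite{XPJ}; the ``if'' direction is a routine mollification once the weight representation $\rho=1/p'$ with $1/p'\in L^1_{\mathrm{loc}}$ is in place.
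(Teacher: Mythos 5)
First, a point of comparison: the paper does not prove Lemma~\ref{LM4} at all; it is quoted from Theorem~3 of \cite{FL2} (a paper in preparation), so there is no in-paper argument to measure yours against. Judged on its own merits, your ``only if'' direction is essentially sound: under \eqref{EQCII} every $g\in C_c^\infty$ has $dg\ll dx$, so by uniqueness of the Lebesgue decomposition $dg/dp=0$ $dp_{\mathrm{s}}$-a.e., the energy of smooth functions only charges $dp_{\mathrm{ac}}$, and a function $u$ with $du/dp=\eta$ for a suitable continuous $\eta$ equal to $1$ on an interval carrying $dp_{\mathrm{s}}$-mass cannot be an $\mathcal{E}^{(p)}_1$-limit of smooth functions. (Two small repairs: justify that such a $u$ lies in the \emph{minimal} domain, e.g.\ by taking $\int\eta\,dp=0$ so $u$ has compact support; and do not write $dg/dp=g'/p'$ when $p$ is not absolutely continuous.)

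The genuine gap is in the ``if'' direction, which you dismiss as routine mollification; it is in fact the hard half of the lemma. Convergence $u_\varepsilon'\to u'$ in $L^2_{\mathrm{loc}}(dx)$ is not what is needed: the energy norm is $\int (u_\varepsilon'-u')^2\,(1/p')\,dx$ with weight $1/p'$ that is merely in $L^1_{\mathrm{loc}}$, and convolution is not continuous on such weighted $L^2$ spaces. Concretely, take $p'=1$ off intervals $I_k=[2^{-k},2^{-k}+8^{-k}]$ and $p'=4^{-k}$ on $I_k$ (so $p$ is absolutely continuous, $p'>0$ a.e., and $1/p'\in L^1_{\mathrm{loc}}$, hence \eqref{EQCII} holds), and let $u'=2^{k/2}$ on $[2^{-k}-4^{-k},2^{-k}]$ and $0$ elsewhere; then $u\in\mathcal{F}^{(p)}$, but for $\varepsilon=4^{-k}$ the mollified derivative is of order $2^{k/2}$ on $I_k$ where $u'=0$ and $1/p'=4^{k}$, so $\int_{I_k}(u_\varepsilon')^2(1/p')\,dx\gtrsim 2^{k}\cdot 2^{-k}$ stays bounded away from $0$. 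So the mollifications do not converge in energy, even though the lemma asserts density holds. (Your auxiliary claim that strict monotonicity forces $p'>0$ a.e.\ is also false --- a strictly increasing a.c.\ function can have $p'=0$ on a fat Cantor set --- although under \eqref{EQCII} one does get $dx\ll dp$ and hence $p'>0$ a.e.\ when $p$ is a.c.) A correct route must exploit the one-dimensional scale structure rather than convolution: e.g.\ observe that the $\mathcal{E}^{(p)}_1$-closure of $C_c^\infty((0,\infty))$ is itself a regular Dirichlet subspace of $(\mathcal{E}^{(p)},\mathcal{F}^{(p)})$, hence by Theorem~4.1 of \cite{XPJ} is of the form $\mathcal{F}^{(\hat p)}$ with $d\hat p/dp\in\{0,1\}$ $dp$-a.e., and then use $dg/dp=g'/p'$ with $p'>0$ a.e.\ to rule out $d\hat p/dp=0$ on a set of positive $dp$-measure. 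This is presumably the content of Theorem~3 of \cite{FL2}; it is not obtainable by the convolution argument you propose.
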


The above lemma is taken from \cite{FL2} (see Theorem~3 of \cite{FL2}), which would be very useful to prove the existence and uniqueness of regular extension. 
Denote by $q$ the inverse function of $p$, i.e. $q=p^{-1}$. Set $J:=p((0,\infty))$. Note that \eqref{EQCII} is equivalent to that $q$ is absolutely continuous and $q'\in L^2_\mathrm{loc}(J)$. 

\begin{lemma}
Assume (\textbf{H}) holds. Then $X^0$ can be written as 
\begin{equation}\label{EQXRT}
	X^0=(r^0_t,\vartheta_{A_t})_{t\geq 0},
\end{equation}
where $r^0$ is a diffusion on $(0,\infty)$, {whose scale function $p$ and speed measure $l$ are 
\[
	\begin{aligned}
		dp(x)&=\frac{1}{|S^{d-1}|\cdot\hat{\rho}(x)\cdot x^{d-1}}dx,\\
		l(dx)&=|S^{d-1}|\cdot \hat{\rho}(x)\cdot x^{d-1}dx,
	\end{aligned}
\]
the associated Dirichlet form of $r^0$ on $L^2\left((0,\infty),l\right)$ is the closure of }
\begin{equation}\label{EQDEPL}
	\begin{aligned}
		\mathcal{D}(\mathcal{E}^{p,l})&=C_c^\infty((0,\infty)),\\
		\mathcal{E}^{p,l}(u,v)&=\frac{1}{2}\int_0^\infty u'(x)v'(x)l(dx),\quad u,v\in \mathcal{D}(\mathcal{E}^{p,l}),
	\end{aligned}
\end{equation}
$\vartheta$ is the ($\sigma$-symmetric) spherical Brownian motion on $S^{d-1}$ and 
\[
	A_t=\int_0^t \frac{1}{(r^0_s)^2}ds,\quad t<\zeta^0.
\]
Here, $|S^{d-1}|$ is the surface area of  unit sphere $S^{d-1}$, $\sigma$ is the uniform surface measure on $S^{d-1}$ and $\zeta^0$ is the life time of $X^0$, i.e. the hitting time of $\{\mathbf{0}\}$ relative to $X$. 
\end{lemma}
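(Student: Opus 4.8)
The plan is to pass to polar coordinates, compute the part Dirichlet form $(\mathcal{E}^0,\mathcal{F}^0)$ explicitly on its core $C_c^\infty(\mathbf{R}^d\setminus\{\mathbf{0}\})$, recognise the outcome as the skew-product expression \eqref{EQFGMA} attached to the triple $(r^0,\vartheta,\mu)$ appearing in the statement, and then read off the three constituents by combining Theorem~\ref{THM1} with Lemma~\ref{LM4}. First I would fix the identification $\mathbf{R}^d\setminus\{\mathbf{0}\}\cong(0,\infty)\times S^{d-1}$ via $x\mapsto(|x|,x/|x|)$ and write $x=r\omega$. Since $\rho(x)=\hat\rho(|x|)$ is radial, the symmetric measure splits as a product $\rho\,dx=l\otimes\sigma$, where $\sigma$ is the normalised uniform measure on $S^{d-1}$, $l(dr)=|S^{d-1}|\hat\rho(r)r^{d-1}\,dr$, $dp(r)=dr/(|S^{d-1}|\hat\rho(r)r^{d-1})$ and $\mu(dr):=r^{-2}l(dr)=|S^{d-1}|\hat\rho(r)r^{d-3}\,dr$. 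Condition \eqref{EQRHO} makes $\hat\rho(r)r^{d-1}$ and $\hat\rho(r)^{-1}r^{d-1}$ locally integrable on $(0,\infty)$, so $l$ and $\mu$ are Radon on $(0,\infty)$, $dp$ has a density in $L^1_{\mathrm{loc}}((0,\infty))$ (hence $p$ is absolutely continuous), and both $l$ and $dp$ are equivalent to Lebesgue measure on $(0,\infty)$.

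The heart of the matter is the computation of the form. For $u\in C_c^\infty(\mathbf{R}^d\setminus\{\mathbf{0}\})$ the classical polar identity $|\nabla u|^2=(\partial_r u)^2+r^{-2}|\nabla_{S^{d-1}}u|^2$ and Fubini's theorem give
\begin{align*}
	\mathcal{E}^0(u,u) &=\int_{S^{d-1}}\Bigl(\frac{1}{2}\int_0^\infty(\partial_r u(r,\omega))^2\,l(dr)\Bigr)\sigma(d\omega)\\
	&\qquad {}+\int_0^\infty\Bigl(\frac{1}{2}\int_{S^{d-1}}|\nabla_{S^{d-1}}u(r,\cdot)|^2\,\sigma(d\omega)\Bigr)\mu(dr).
\end{align*}
The inner radial integral is exactly $\mathcal{E}^{p,l}(u(\cdot,\omega),u(\cdot,\omega))$ from \eqref{EQDEPL}, and the inner spherical integral is $\mathcal{E}^\vartheta(u(r,\cdot),u(r,\cdot))$, where $(\mathcal{E}^\vartheta,\mathcal{F}^\vartheta)$ on $L^2(S^{d-1},\sigma)$ is the Dirichlet form of the $\sigma$-symmetric spherical Brownian motion $\vartheta$. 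Thus $\mathcal{E}^0(u,u)$ is precisely the right-hand side of \eqref{EQFGMA} for $(E_1,m_1)=((0,\infty),l)$, $(E_2,m_2)=(S^{d-1},\sigma)$ and $\mu_A=\mu$.

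It remains to identify the constituents and to close the argument. The form $\mathcal{E}^{p,l}$ on $C_c^\infty((0,\infty))$ is closable (being a restriction of $\mathcal{E}^0$), $p$ is absolutely continuous, and $C_c^\infty((0,\infty))\subset\mathcal{F}^{(p)}$ because $\frac{1}{2}\int_0^\infty(u')^2\,l$ and $\int_0^\infty u^2\,l$ are finite for compactly supported $u$ (equivalently, $q=p^{-1}$ is absolutely continuous with $q'\in L^2_{\mathrm{loc}}(J)$); hence Lemma~\ref{LM4} shows that the closure of \eqref{EQDEPL} is the regular, strongly local Dirichlet form of the minimal diffusion $r^{(p)}=:r^0$ with scale function $p$ and speed measure $l$. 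One then applies Theorem~\ref{THM1} to $(r^0,\vartheta,\mu)$: since $\mu$ is Radon on $(0,\infty)$ and both component forms are regular and strongly local, the skew product $[r^0,\vartheta,\mu]$ is $(l\otimes\sigma)$-symmetric and its Dirichlet form is regular on $L^2(\mathbf{R}^d\setminus\{\mathbf{0}\},\rho\,dx)$ with the representation \eqref{EQFGMA}. Comparing with the previous paragraph, this Dirichlet form and $(\mathcal{E}^0,\mathcal{F}^0)$ are two regular Dirichlet forms on the same space that agree on $C_c^\infty(\mathbf{R}^d\setminus\{\mathbf{0}\})$; since a time change of $r^0$ along the PCAF of full quasi support with Revuz measure $\mu$ brings $(r^0,\vartheta,\mu)$ into the situation $\mu_A\le m_1$ of Theorem~\ref{THM1}, in which $C_c^\infty((0,\infty))\otimes C_c^\infty(S^{d-1})$ is a core, and such time changes preserve the inclusion relations of Dirichlet spaces (exactly the device used in the proof of Theorem~\ref{THM2}), $C_c^\infty(\mathbf{R}^d\setminus\{\mathbf{0}\})$ is also a core of the skew-product form, and the two forms coincide. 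By the one-to-one correspondence between regular Dirichlet forms and symmetric diffusions, $X^0=[r^0,\vartheta,\mu]=(r^0_t,\vartheta_{A_t})_{t<\zeta^0}$, where $A_t=\int_0^t(r^0_s)^{-2}\,ds$ is the PCAF of $r^0$ with Revuz measure $r^{-2}l(dr)=\mu$.

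The polar change of variables and the two applications of Fubini are routine. The part that needs care is the core bookkeeping: one must know that $C_c^\infty(\mathbf{R}^d\setminus\{\mathbf{0}\})$ is simultaneously a core of $(\mathcal{E}^0,\mathcal{F}^0)$ (already recorded in the excerpt) and of the skew-product form, and the latter is not immediate from Theorem~\ref{THM1} because $\mu=r^{-2}l$ fails to be dominated by $l$ near the origin. This is precisely why the time-change reduction (as in Theorem~\ref{THM2}) is needed, and making that reduction precise, together with the attendant denseness of $C_c^\infty((0,\infty))\otimes C_c^\infty(S^{d-1})$ in the relevant cores, is where I expect the main work to lie.
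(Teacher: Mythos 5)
Your proposal is correct and follows essentially the same route as the paper's proof: the polar decomposition of the energy form, identification of the radial component via Lemma~\ref{LM4} (after checking \eqref{EQCII} and the absolute continuity of $p$ from \eqref{EQRHO}), and an appeal to Theorem~\ref{THM1} for the skew-product representation. Your closing discussion of the core-matching issue (that $\mu=r^{-2}l$ need not be dominated by $l$, so the ``core'' clause of Theorem~\ref{THM1} does not apply directly and a time-change reduction as in the proof of Theorem~\ref{THM2} is needed) is a point the paper's proof passes over silently, and your treatment of it is sound.
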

\begin{proof}
Let $q:=p^{-1}$. From \eqref{EQRHO}, we conclude that $q$ is absolutely continuous and $q'\in L^2_\mathrm{loc}$. It follows that \eqref{EQCII} holds. 
Note that for any $u\in C_c^\infty((0,\infty))$, 
\[
	\int_0^\infty (\frac{du}{dp})^2(x)dp(x)=\int_0^\infty u'(x)^2 q'(p(x))^2dp(x)=\int_0^\infty u'(x)^2l(dx).
\]
Then it follows from Lemma~\ref{LM4} that the minimal diffusion on $(0,\infty)$ with scaling function $p$ and speed measure $l$ can be characterized by \eqref{EQDEPL}.

On the other hand, since $\mathbf{R}^d\setminus \{0\}=(0,\infty)\times S^{d-1}$, it follows from 
\[
	\Delta=\frac{1}{r^{d-1}}\frac{\partial}{\partial r}(r^{d-1}\frac{\partial}{\partial r})+\frac{1}{r^2}\Delta_{S^{d-1}}
\]
that for any $u\in C_c^\infty(\mathbf{R}^d\setminus \{0\})$, 
\[
\begin{aligned}
	\mathcal{E}^0(u,u)=& \frac{1}{2}\int_{S^{d-1}}\int_0^\infty (\frac{\partial u}{\partial r})(r,y)l(dr)\sigma(dy)\\&+\int_0^\infty \frac{1}{2}(-\Delta_{S^{d-1}} u(r,\cdot), u(r,\cdot))_\sigma \mu_A(dr),
\end{aligned}\]
where $r$ is the radius coordinate and $\Delta_{S^{d-1}}$ is the Laplace-Beltrami operator on $S^{d-1}$. From Theorem~\ref{THM1}, we obtain \eqref{EQXRT}. That completes the proof. 
\end{proof}

Our main result of this section is as follows.

\begin{theorem}\label{THM3}
Assume (\textbf{H}) holds. Let $(\tilde{\mathcal{E}}^0,\tilde{\mathcal{F}}^0)\prec(\mathcal{E}^0,\mathcal{F}^0)$ on $L^2(\mathbf{R}^d\setminus \{\mathbf{0}\},\rho dx)$, and  its associated diffusion  $\tilde{X}^0\in \mathcal{R}$. Then there exists a unique regular extension $(\tilde{\mathcal{E}},\tilde{\mathcal{F}})$ of $(\tilde{\mathcal{E}}^0,\tilde{\mathcal{F}}^0)$ on $\mathbf{R}^d$, i.e. 
\[
	(\tilde{\mathcal{E}},\tilde{\mathcal{F}})\prec (\mathcal{E,F}),
\]
and $(\tilde{\mathcal{E}}^0,\tilde{\mathcal{F}}^0)$ is exactly the part Dirichlet form of $(\tilde{\mathcal{E}},\tilde{\mathcal{F}})$ on $\mathbf{R}^d\setminus \{\mathbf{0}\}$.
\end{theorem}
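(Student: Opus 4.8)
The plan is to use the skew product characterization of regular subspaces (Theorem~\ref{THM2}, or rather its specialization Corollary~\ref{COR3}) to reduce the $d$-dimensional problem to a one-dimensional one on $(0,\infty)$, and then to invoke the one-point extension technique. Since $\tilde{X}^0 \in \mathcal{R}$, by Corollary~\ref{COR3} we may write $\tilde{X}^0 = (\tilde{r}^0_t, \vartheta_{\tilde{A}_t})_{t\geq 0}$ with $\tilde{r}^0 \prec r^0$ and the skew smooth measure $\mu_A = l(dx)/x^2$ unchanged; in particular $\tilde{r}^0$ has the same speed measure $l$ as $r^0$ but a scale function $\tilde{p}$ with $d\tilde{p}/dp \in \{0,1\}$ a.e. The first step is to construct the candidate extension $(\tilde{\mathcal{E}},\tilde{\mathcal{F}})$ explicitly: I would define $\tilde{\mathcal{F}}$ as the set of functions $u$ on $\mathbf{R}^d$ whose restriction to $\mathbf{R}^d\setminus\{0\}$ lies in $\tilde{\mathcal{F}}^0_e$ (the extended Dirichlet space) and which, when expressed in polar-type coordinates adapted to the scale function $\tilde{p}$, extend continuously (after the homeomorphic change of variable $x \mapsto \tilde{p}(|x|)$ on the radial part) across the origin, together with the one-dimensional finiteness/closability conditions near $0$; the form $\tilde{\mathcal{E}}$ is the restriction of $\mathcal{E}$ to $\tilde{\mathcal{F}}$. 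The point is that adding the origin back amounts, on the radial side, to extending the minimal diffusion $\tilde{r}^0$ on $(0,\infty)$ to a diffusion $\tilde{r}$ on $[0,\infty)$ or through $0$, which is governed by the boundary classification of $\tilde{r}^0$ at $0$ relative to $(\tilde{p},l)$.

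\textbf{Key steps in order.} (1) Show $\tilde{\mathcal{F}} \subset \mathcal{F}$ and $\tilde{\mathcal{E}} = \mathcal{E}$ on $\tilde{\mathcal{F}}$: this uses that $\tilde{\mathcal{F}}^0 \prec \mathcal{F}^0$ together with the fact that $\mathcal{F}$ is obtained from $\mathcal{F}^0$ by adjoining the $1$-dimensional space (or zero-dimensional, if $\{0\}$ is polar) of functions not vanishing at $0$, and that the same adjunction works compatibly for the subspace because the boundary behaviour of $\tilde{r}^0$ at $0$ is inherited from that of $r^0$ (a regular boundary for $r^0$ is regular for $\tilde{r}^0$, since $l$ is unchanged and only $p$ is replaced by $\tilde p$ with $d\tilde p/dp \le 1$; one checks $\int_0^c l((x,c))\,d\tilde p(x) \le \int_0^c l((x,c))\,dp(x)$). (2) Show $(\tilde{\mathcal{E}},\tilde{\mathcal{F}})$ is a regular Dirichlet form on $L^2(\mathbf{R}^d,\rho\,dx)$: regularity follows from Lemma~\ref{LM4} applied on the radial coordinate (absolute continuity of the radial scale function of the extended diffusion, inherited because $\tilde p$ is absolutely continuous as $p$ is and $d\tilde p/dp \in \{0,1\}$), combined with the tensor/skew product structure and Theorem~\ref{THM1}; a core is $C^\infty_c$ of the radial variable tensored with $C^\infty(S^{d-1})$, transported by the change of variables. (3) Show the part form of $(\tilde{\mathcal{E}},\tilde{\mathcal{F}})$ on $\mathbf{R}^d\setminus\{0\}$ is exactly $(\tilde{\mathcal{E}}^0,\tilde{\mathcal{F}}^0)$: by definition of part Dirichlet form \eqref{EQEFZ}, $\tilde{\mathcal{F}}^{0,\mathrm{part}} = \{u \in \tilde{\mathcal{F}} : \tilde u(0) = 0 \ \tilde{\mathcal{E}}\text{-q.e.}\}$, and one identifies this with $\tilde{\mathcal{F}}^0$ using that $\tilde{\mathcal{E}}$-quasi-continuity on $\mathbf{R}^d$ restricts to $\tilde{\mathcal{E}}^0$-quasi-continuity on $\mathbf{R}^d\setminus\{0\}$ plus the right behaviour at $0$ forced by the one-dimensional reduction. (4) Uniqueness: if $(\tilde{\mathcal{E}}_1,\tilde{\mathcal{F}}_1)$ and $(\tilde{\mathcal{E}}_2,\tilde{\mathcal{F}}_2)$ are both regular extensions, then they agree on $\tilde{\mathcal{F}}^0$ and the only freedom is in how functions behave at the single point $\{0\}$; since $\{0\}$ is a one-point set, the extra piece of the domain is at most one-dimensional, and the requirement that the form extend $\mathcal{E}$ (i.e. $\tilde{\mathcal{F}}_i \subset \mathcal{F}$, $\tilde{\mathcal{E}}_i = \mathcal{E}$ there) together with closedness pins it down uniquely — this is exactly the rigidity of one-point extensions from \cite{CFY}, \cite{MF3}, \cite{FT}.

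\textbf{Main obstacle.} I expect the crux to be Step (2)--(3), specifically verifying that the candidate $\tilde{\mathcal{F}}$ is \emph{closed} in $\mathcal{E}_1^{1/2}$-norm and contains a genuine core \emph{on all of} $\mathbf{R}^d$ (not just on $\mathbf{R}^d\setminus\{0\}$): one must control the radial behaviour at $0$ after the non-smooth change of variables $|x| \mapsto \tilde p(|x|)$, whose inverse $\tilde q$ need not be absolutely continuous even though $\tilde p$ is — this is precisely where Lemma~\ref{LM4} and the observation following it ($\tilde q$ absolutely continuous with $\tilde q' \in L^2_{\mathrm{loc}}$ iff $C^\infty_c$ is a core) must be used carefully, and where the hypothesis (\textbf{H}) (so that $C^\infty_c(\mathbf{R}^d)$ is a core for the ambient form, forcing the analogous radial statement for $r^0$, hence for $\tilde r^0$) does the real work. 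A secondary subtlety is handling the dichotomy between the case $\{0\}$ polar for $X$ (where $(\mathcal{E}^0,\mathcal{F}^0) = (\mathcal{E},\mathcal{F})$ and one must show the same holds for the subspace, so the "extension" is trivial) versus $\{0\}$ non-polar (the genuinely new case, e.g. Example~\ref{EXA2}), and checking that the classification of the boundary point $0$ is truly preserved under passage to the regular subspace, which rests on $l$ being unchanged and $d\tilde p \le dp$.
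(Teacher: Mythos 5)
Your overall architecture matches the paper's: reduce to the radial process via Corollary~\ref{COR3}, split on the boundary classification of $0$ for $\tilde r^0$, and settle uniqueness by the rigidity of one-point extensions (the paper invokes Theorem~7.5.4 of \cite{CM} after noting, via Theorem~1 of \cite{LY}, that any regular extension has no killing inside, so it really is a one-point extension of $\tilde X^0$). Your uniqueness sketch is therefore acceptable. But the existence half has a genuine gap, and it is exactly the obstacle you flag at the end without resolving. Your candidate domain ("functions extending continuously after the change of variable $x\mapsto\tilde p(|x|)$, with finiteness conditions") is never shown to contain a single function of $\mathcal F$ that is nonzero at the origin, which is the whole content of the existence claim when $0$ is a regular boundary for $\tilde r^0$. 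The difficulty is that $\tilde q=\tilde p^{-1}$ need not be absolutely continuous (its distributional derivative can have a singular part) and, even where it is, $\hat q'{}^2$ need not be integrable near the image of $0$, so naive compositions $\psi\circ\tilde p(|\cdot|)$ can have infinite energy near the origin. The paper's proof hinges on an explicit repair you do not have: it takes the Lebesgue decomposition $d\tilde q=g\,dr+c(dr)$, replaces $\tilde q$ by the truncated, absolutely continuous $\hat q(r)=\int_{-\infty}^r(g(t)\wedge \mathrm{e}^t)\,dt$, transports $C_c^\infty(B_L)$ by the resulting radial homeomorphism $T$, and verifies membership in $\mathcal F$ via the bound $\hat l((-\infty,r])\le\int_{-\infty}^r \mathrm{e}^{2t}dt<\infty$; the closure of $(\mathcal E,C_c^\infty(B_L)\circ T)$ is then checked, through Theorem~\ref{THM1}, to have part process exactly $\tilde X^0$. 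Without some such construction your step~(2) is an assertion, not a proof; Lemma~\ref{LM4} alone does not supply it, since it concerns cores on $(0,\infty)$ rather than membership in $\mathcal F$ across the origin.

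A secondary but real error: in step~(1) you claim the boundary behaviour of $\tilde r^0$ at $0$ is ``inherited'' from $r^0$, so that ``the same adjunction works compatibly.'' Only one direction holds ($d\tilde p\le dp$ gives $\tilde p(0+)>-\infty$ whenever $p(0+)>-\infty$). The converse fails: one can have $p(0+)=-\infty$ (so $\{0\}$ polar for $X$ and $\mathcal F=\mathcal F^0$) while $\tilde p(0+)>-\infty$ (so $\{0\}$ is non-polar for $\tilde X^0$ and $\tilde{\mathcal F}$ must strictly contain $\tilde{\mathcal F}^0$). This is precisely the situation of Corollary~\ref{COR6} and is the case where the extension is genuinely new; your argument implicitly assumes it away.
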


\begin{proof}
Assume	$(\tilde{\mathcal{E}},\tilde{\mathcal{F}})$ and $(\mathcal{E}',\mathcal{F}')$ are both regular extensions of $(\tilde{\mathcal{E}}^0,\tilde{\mathcal{F}}^0)$ and $\tilde{X}, X'$ are their corresponding diffusions respectively. It follows from Theorem~1 of  \cite{LY} that $\tilde{X},X'$ have no killing inside. Thus $\tilde{X},X'$ are both the \emph{one-point extensions} (Cf. Definition~7.5.1 of  \cite{CM}) of $\tilde{X}^0$. On the other hand, from Corollary~\ref{COR3}, we can write
\[
	\tilde{X}^0=(\tilde{r}^0_t,\vartheta_{\tilde{A}_t})_{t\geq 0},
\]
where $\tilde{r}^0\prec r$ and $\tilde{A}_t=\int_0^t 1/(\tilde{r}^0_s)^2ds$. Let $\tilde{p}$ be the scale function of $\tilde{r}^0$. It follows from  \cite{XPJ} that $d\tilde{p}\ll dp$ and $d\tilde{p}/dp=0$ or $1$ a.e. Note that $$p(0+):=\lim_{x\downarrow 0} p(x)$$ may equal $-\infty$ (such as Example~\ref{EXA1}) or be finite (such as Example~\ref{EXA2}). If $p(0+)>-\infty$, it follows that $\tilde{p}(0+)>-\infty$. If $p(0+)=-\infty$, then $\tilde{p}(0+)$ may be finite or infinite (see Example~5.2 of  \cite{XPJ} for an example of finite case). 
Consider the first case:
	\[
		\tilde{p}(0+)=-\infty.
	\]
	Note that from \eqref{EQRHO}, we have $l((0,c))<\infty$ for any $c>0$.
Then $0$ is an unapproachable boundary point with respect to $\tilde{r}^0$. Thus $\tilde{r}^0$ cannot hit $0$ at finite time. More precisely, it implies that
\[
	P^x_{\tilde{r}^0}(\tilde{\sigma}_0<\infty)=0,\quad x\in (0,\infty),
\]
where $P^x_{\tilde{r}^0}$ is the probability measure of $\tilde{r}^0$ starting from $x$, and $\tilde{\sigma}_0$ is the hitting time of $\{0\}$ relative to $\tilde{r}^0$. Since $\tilde{r}^0=|\tilde{X}^0|$, we can conclude that
\[
	\tilde{P}_0^x(\tilde{T}_0<\infty)=0,\quad x\in \mathbf{R}^d\setminus \{\mathbf{0}\},
\]
where $\tilde{P}_0^x$ is the probability measure of $\tilde{X}^0$, and $\tilde{T}_0$ is the hitting time of $\{\mathbf{0}\}$ relative to $\tilde{X}^0$. Hence $\{\mathbf{0}\}$ is a $\rho dx$-polar set with respect to $X'$ or $\tilde{X}$ because they coincide with $\tilde{X}^0$ on $\mathbf{R}^d\setminus \{\mathbf{0}\}$. Then we have $X'=\tilde{X}$.
Now consider the second case: 
\[
	\tilde{p}(0+)>-\infty.
\]
In other words, $0$ is a regular boundary point of $\tilde{r}^0$. Thus 
\[
	P^x_{\tilde{r}^0}(\tilde{\sigma}_0<\infty,\tilde{r}^0_{\tilde{\sigma}_0-}=0)>0,\quad x\in (0,\infty).
\]
Then 
\[
	\tilde{P}_0^x(\tilde{T}_0<\infty, \tilde{X}^0_{\tilde{T}_0-}=\mathbf{0})>0,\quad x\in \mathbf{R}^d\setminus \{\mathbf{0}\},
\]
since $\tilde{X}^0$ is rotationally invariant. It follows from Theorem~7.5.4 of  \cite{CM} that the one-point extension of $\tilde{X}^0$ on $\mathbf{R}^d$ is unique, i.e. $X'=\tilde{X}$.

{Next, we turn to the proof of the existence of regular extension of 	$(\tilde{\mathcal{E}}^0,\tilde{\mathcal{F}}^0)$.}
First we still consider the case: 
\[
	\tilde{p}(0+)=-\infty.
\]
Let $M=\tilde{p}(\infty):=\lim_{r\uparrow \infty}\tilde{p}(r)$ and $\tilde{q}:=\tilde{p}^{-1}$. Since $\tilde{p}$ is strictly increasing and continuous, it follows that $\tilde{q}$ is also a strictly increasing and continuous function on $(-\infty,M)$. Denote the Lebesgue decomposition of $d\tilde{q}$ with respect to the Lebesgue measure $dr$ on $(-\infty, M)$ by 
\[
	d\tilde{q}(r)=g(r)dr+c(dr),
\]
{where $g\in L_{\text{loc}}^1((-\infty,M))$ is non-negative}, and $c$ is another Radon measure  supported on a set $H$ of zero Lebesgue measure. Note that $g$ is strictly positive almost everywhere. In fact, let $Z_g:=\{r\in (-\infty,M)\setminus H:g(r)=0\}$. Since
\[
	d\tilde{q}(Z_g)=\int_{Z_g}g(r)dr=0
\]
and $dr=\tilde{p}'(\tilde{q})d\tilde{q}$, we can deduce that the Lebesgue measure $|Z_g|$ of $Z_g$ is 0.  Define a function $\hat{q}$ on $(-\infty, M)$ by
\begin{equation}\label{EQQGT}
	\hat{q}(r):=\int_{-\infty}^r (g(t)\wedge \text{e}^t)dt,\quad r\in (-\infty,M).
\end{equation}
Clearly $\hat{q}(-\infty)=\tilde{q}(-\infty)=0$ and $\hat{q}(r)\leq \tilde{q}(r)$ for any $r\in (-\infty, M)$. Moreover, $\hat{q}$ is strictly increasing and absolutely continuous. Its inverse function $\hat{p}:=\hat{q}^{-1}$ is also strictly increasing and absolutely continuous because the Lebesgue measure of $Z_{\hat{q}'}:=\{r\in (-\infty,M):\hat{q}'(r)=0\}$ is zero. Let $L=\hat{q}(M)$ and $B_L:=\{x\in \mathbf{R}^d:|x|<L\}$. We have
\begin{equation}\label{EQPQ}
\hat{q}(\tilde{p}(r))\leq \tilde{q}(\tilde{p}(r))=r,\quad r\in (0,\infty).
\end{equation} 
Denote a measure $\hat{l}(dr):=\hat{q}'(r)^2dr$ on $(-\infty,M)$. Then for any $r\in (-\infty, M)$, we have
\begin{equation}\label{EQRET}
	\hat{l}((-\infty,r])\leq \int_{-\infty}^r \text{e}^{2t}dt<\infty. 
\end{equation}
Define a transform $h:[0,\infty)\rightarrow [0,L)$ by $h(0)=0$, $h(r)=\hat{q}(\tilde{p}(r))$ for $r>0$ and another transform $T:\mathbf{R}^d\rightarrow B_L$ by
\[
	Tx=(h(r),\theta,\varphi),
\] 
where $x=(r,\theta,\varphi)$ is the spherical coordinate of $x$. We claim that 
\begin{equation}\label{EQBLT}
	C_c^\infty(B_L)\circ T:=\{u=\psi\circ T:\psi\in C_c^\infty(B_L)\}\subset \mathcal{F}.
\end{equation}
To this end, note that 
\begin{equation}\label{EQNF}
	|\nabla f|^2(x)=(\frac{\partial f}{\partial r})^2+\frac{1}{r^2}[(\frac{\partial f}{\partial \theta})^2+\frac{1}{\sin^2\theta}(\frac{\partial f}{\partial \varphi})^2]
\end{equation}
for $x\neq 0$ and appropriate function $f$. We denote 
\[
	(\frac{\partial f}{\partial \sigma})^2:=(\frac{\partial f}{\partial \theta})^2+\frac{1}{\sin^2\theta}(\frac{\partial f}{\partial \varphi})^2
\]
for convenience. Then for any $u=\psi\circ T\in C_c^\infty(B_L)\circ T$, since $u$ is continuous with compact support, it follows that $u\in L^2(\mathbf{R}^d,\rho dx)$. On the other hand, from \eqref{EQNF} and Proposition~1 of  \cite{FL}, we can deduce that
\[
\begin{aligned}
	\int |\nabla u|^2(x)\rho(x)dx&=\int_{S^{d-1}}\int_0^\infty (\frac{\partial u}{\partial r})^2l(dr)d\sigma +\int_0^\infty \int_{S^{d-1}} (\frac{\partial u}{\partial \sigma})^2d\sigma d\mu \\
	&=\int_{S^{d-1}}\int_0^\infty (\frac{\partial u}{\partial \tilde{p}})^2 d\tilde{p}d\sigma+\int_0^L\int_{S^{d-1}}(\frac{\partial \psi}{\partial \sigma})^2d\sigma d\mu\circ h^{-1}.
\end{aligned}\]
But 
\[\begin{aligned}
	\int_0^\infty (\frac{\partial u}{\partial \tilde{p}})^2 d\tilde{p}&=\int_{-\infty}^M (\frac{\partial \psi\circ \hat{q}}{\partial r})^2 dr\\&=\int_{-\infty}^M (\frac{\partial \psi}{\partial r}(\hat{q}(r))^2\hat{q}'(r)^2dr\\&=\int_0^L (\frac{\partial \psi}{\partial r})^2d\hat{l}\circ \hat{p}.
\end{aligned}\]
Thus 
\[
	\begin{aligned}
		\int |\nabla u|^2(x)\rho(x)dx&\leq \int_{B_L} |\nabla \varphi|^2 d(\hat{l}\circ \hat{p}+r^2\cdot\mu\circ h^{-1})d\sigma.
	\end{aligned}
\]
Note that for any $t\in (0,L)$, it follows from \eqref{EQPQ} and \eqref{EQRET} that 
\[
	\hat{l}\circ \hat{p}((0,t])= \hat{l}(({-\infty},{\hat{p}}(t)])<\infty
\]
and 
\[
	r^2\mu\circ h^{-1}((0,t])=\int_0^{h(t)}\frac{h(r)^2}{r^2}l(dr)\leq l((0,h(t)])<\infty.
\]
Hence
\[
	\int |\nabla u|^2(x)\rho(x)dx\leq \infty
\]
and \eqref{EQBLT} is proved. Then $(\mathcal{E},C_c^\infty(B_L)\circ T)$ is closable on $L^2(\mathbf{R}^d,\rho dx)$ and its closure, denoted by $(\hat{\mathcal{E}},\hat{\mathcal{F}})$, is a regular subspace of $(\mathcal{E,F})$. We will illustrate that $ (\hat{\mathcal{E}},\hat{\mathcal{F}})$ is exactly the required Dirichlet form $(\tilde{\mathcal{E}},\tilde{\mathcal{F}})$. In fact, the part Dirichlet form $(\hat{\mathcal{E}}^0,\hat{\mathcal{F}}^0)$ on $\mathbf{R}^d\setminus \{\mathbf{0}\}$ of $(\hat{\mathcal{E}},\hat{\mathcal{F}})$ is regular with the core $C_c^\infty(B_L\setminus \{\mathbf{0}\})\circ T$. Let $\hat{X}^0$ be the associated Markov process of $(\hat{\mathcal{E}}^0,\hat{\mathcal{F}}^0)$. We can deduce easily from Theorem~\ref{THM1} that $T(\hat{X}^0)$ can be written as
\[
	T(\hat{X}^0)=(\hat{r}^0_t,\vartheta_{\hat{A}_t})_{t\geq 0},
\]
where $\hat{r}^0$ is a diffusion on $(0,L)$ with scale function $\hat{p}$ and speed measure $l\circ h^{-1}$, and $\hat{A}$ is the PCAF of $\hat{r}^0$ whose associated Revuz measure is $\mu\circ h^{-1}$. Then 
\[
	\hat{X}^0=(h^{-1}(\hat{r}^0_t),\vartheta_{\hat{A}_t})_{t\geq 0}:=(\tilde{r}^0_t,\vartheta_{\hat{A}_t})_{t\geq 0}
\]
where $\tilde{r}^0$ is a diffusion on $(0,\infty)$ with the scale function $\tilde{p}$ and the speed measure $l$, and $\hat{A}$ is the PCAF of $\tilde{r}^0$, whose corresponding Revuz measure is $\mu$. Therefore, we obtain $\hat{X}^0=\tilde{X}^0$. 

Now we consider the second case:
\[
	\tilde{p}(0+)>-\infty.
\]
Without loss of generality,  assume that $\tilde{p}(0+)=0$. We can replace \eqref{EQQGT} by 
\[
	\hat{q}(r):=\int_0^r (g(t)\wedge t)dt,\quad r\in (0,M)
\] 
and prove this case through the same way as the first case.    
\end{proof}

\begin{remark}
	Since $(\mathcal{E}^0,\mathcal{F}^0)$ is irreducible, it follows from Lemma~7.5.3 of  \cite{CM} and Remark~\ref{RM4} that $(\mathcal{E,F})$, $(\tilde{\mathcal{E}}^0,\tilde{\mathcal{F}}^0)$ and $(\tilde{\mathcal{E}},\tilde{\mathcal{F}})$ are all irreducible. But other global properties may be different between  regular subspace on $\mathbf{R}^d\setminus \{\mathbf{0}\}$ and its regular extension on $\mathbf{R}^d$. For instance, as outlined in Example~\ref{EXA2}, the Dirichlet form $(\mathcal{E,F})$ given by \eqref{EQFEF} is recurrent because $1\in \mathcal{F}$ and $\mathcal{E}(1,1)=0$. However, from Example~\ref{EXA4}, we know that its part Dirichlet form on $\mathbf{R}^3\setminus \{\mathbf{0}\}$ is transient.
\end{remark}

Note that the first kind of regular subspace, i.e. $\tilde{p}(0+)=-\infty$, can only happen when $p(0+)=-\infty$. That means if $\{\mathbf{0}\}$ is a $\rho dx$-polar set with respect to $\tilde{X}^0$, then  the regular subspace $(\tilde{\mathcal{E}}^0,\tilde{\mathcal{F}}^0)$ of $(\mathcal{E}^0,\mathcal{F}^0)$ on $L^2(\mathbf{R}^d\setminus \{0\},\rho dx)$ is  regular on $L^2(\mathbf{R}^d,\rho dx)$. Hence it is also a regular subspace of $(\mathcal{E,F})$ on $L^2(\mathbf{R}^d,\rho dx)$. Moreover, the second case may happen even if $p(0+)=-\infty$. Thus we have the following corollary.

\begin{corollary}\label{COR6}
	Under the same assumptions and notations in Theorem~\ref{THM3}, assume in addition that $\{\mathbf{0}\}$ is $\rho dx$-polar with respect to $X$ but not $\rho dx$-polar with respect to $\tilde{X}^0$. Then $\mathcal{F}^0=\mathcal{F}$, and there exists a unique regular extension $(\tilde{\mathcal{E}},\tilde{\mathcal{F}})$  on $\mathbf{R}^d$ of $(\tilde{\mathcal{E}}^0,\tilde{\mathcal{F}}^0)$ such that $\tilde{\mathcal{F}}^0\neq \tilde{\mathcal{F}}$.
\end{corollary}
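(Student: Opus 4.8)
The plan is to derive Corollary~\ref{COR6} directly from Theorem~\ref{THM3} together with the description of the two boundary cases appearing in its proof. First I would note that the hypothesis ``$\{\mathbf{0}\}$ is $\rho dx$-polar with respect to $X$'' is exactly the condition under which the part Dirichlet form on $\mathbf{R}^d\setminus\{\mathbf{0}\}$ does not lose anything, so by the discussion immediately preceding \eqref{EQEFZ} (and the standard characterization of part Dirichlet forms, e.g. Theorem~4.4.3 of \cite{FU}) we get $\mathcal{F}^0=\mathcal{F}$, hence $(\mathcal{E}^0,\mathcal{F}^0)=(\mathcal{E},\mathcal{F})$. This takes care of the first assertion.

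For the second assertion, I would invoke Theorem~\ref{THM3} to produce the unique regular extension $(\tilde{\mathcal{E}},\tilde{\mathcal{F}})\prec(\mathcal{E},\mathcal{F})$ whose part Dirichlet form on $\mathbf{R}^d\setminus\{\mathbf{0}\}$ is $(\tilde{\mathcal{E}}^0,\tilde{\mathcal{F}}^0)$. The key point is that the extra hypothesis ``$\{\mathbf{0}\}$ is \emph{not} $\rho dx$-polar with respect to $\tilde{X}^0$'' forces us into the \textbf{second} case of the proof of Theorem~\ref{THM3}, namely $\tilde{p}(0+)>-\infty$: indeed, in the first case ($\tilde{p}(0+)=-\infty$) it was shown that $0$ is an unapproachable boundary point of $\tilde r^0$, which makes $\{\mathbf{0}\}$ $\rho dx$-polar for $\tilde X^0$, contradicting our assumption. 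So $0$ is a regular boundary point of $\tilde r^0$, and as in the proof of Theorem~\ref{THM3} we then have, for $x\in\mathbf{R}^d\setminus\{\mathbf{0}\}$,
\[
\tilde{P}_0^x(\tilde{T}_0<\infty,\ \tilde{X}^0_{\tilde{T}_0-}=\mathbf{0})>0,
\]
i.e. $\tilde X^0$ genuinely hits (the boundary point) $\{\mathbf{0}\}$ with positive probability. Since $\tilde X$ (the diffusion of the regular extension) extends $\tilde X^0$ by gluing a return motion around $\mathbf{0}$, the point $\{\mathbf{0}\}$ is \emph{not} $\rho dx$-polar with respect to $\tilde X$. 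By the same characterization of part Dirichlet forms used above, this non-polarity is precisely equivalent to $\tilde{\mathcal{F}}^0\neq\tilde{\mathcal{F}}$: some functions in $\tilde{\mathcal{F}}$ do not vanish q.e.\ at $0$, so they are lost when passing to the part form.

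I would then also remark that this situation really does occur: as noted in the paragraph before the corollary, Example~5.2 of \cite{XPJ} provides a scale function $p$ with $p(0+)=-\infty$ admitting a regular subspace with $\tilde p(0+)>-\infty$, and Example~\ref{EXA1} provides an ambient $(\mathcal{E},\mathcal{F})$ (high-dimensional Brownian motion) with $\{\mathbf{0}\}$ polar, so the hypotheses of the corollary are not vacuous. The argument is short because all the real work is already inside Theorem~\ref{THM3}; the only genuine step is the equivalence between ``$\{\mathbf{0}\}$ not $\rho dx$-polar for the process'' and ``the part form is strictly smaller'', which is a standard fact about part Dirichlet forms. The main obstacle, such as it is, is purely bookkeeping: making sure that the polarity statements are phrased with respect to the correct process ($X$ vs. $\tilde X^0$ vs. $\tilde X$) and the correct measure $\rho dx$, and that ``$0$ regular for $\tilde r^0$'' transfers to ``$\{\mathbf{0}\}$ hit by $\tilde X^0$'' via the skew-product/rotation-invariance identification $\tilde r^0=|\tilde X^0|$, exactly as was done in the proof of Theorem~\ref{THM3}.
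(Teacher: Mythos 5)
Your proposal is correct and follows essentially the same route as the paper: the first assertion is the paper's own earlier observation that $\rho dx$-polarity of $\{\mathbf{0}\}$ for $X$ gives $(\mathcal{E}^0,\mathcal{F}^0)=(\mathcal{E},\mathcal{F})$, and the second assertion is obtained, exactly as in the discussion preceding the corollary, by noting that non-polarity of $\{\mathbf{0}\}$ for $\tilde{X}^0$ forces the case $\tilde{p}(0+)>-\infty$ of Theorem~\ref{THM3}, whence $\{\mathbf{0}\}$ is hit by $\tilde{X}$ and the part form is strictly smaller. Your bookkeeping of which process each polarity statement refers to, and the transfer via $\tilde{r}^0=|\tilde{X}^0|$, is exactly what the paper intends.
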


In a word, in the context of rotationally invariant diffusions, if we want to consider the problems about the regular subspaces on $\mathbf{R}^d$, we may first consider the accordant problems on $\mathbf{R}^d\setminus \{\mathbf{0}\}$, then there always exists  a unique regular extension onto $\mathbf{R}^d$. Thus it does not matter in Example~\ref{EXA3} that we only make the discussions on regular subspaces of  Brownian motion on $\mathbf{R}^d\setminus \{\mathbf{0}\}$, because every such regular subspace, say $(\mathcal{E}',\mathcal{F}')$, uniquely corresponds to a regular subspace of $(\frac{1}{2}\mathbf{D},H^1(\mathbf{R}^d))$. If $$p'(0+)=-\infty$$ (the notation employed in Example~\ref{EXA3}), then $(\mathcal{E}',\mathcal{F}')$ itself is also regular on $L^2(\mathbf{R}^d)$. Hence  $$(\mathcal{E}',\mathcal{F}')\prec (\frac{1}{2}\mathbf{D},H^1(\mathbf{R}^d)).$$ If $p'(0+)>-\infty$, then the regular extension of $(\mathcal{E}',\mathcal{F}')$ differs to itself and can be constructed through the same way as Theorem~\ref{THM3}.

\section*{Acknowledgement}
This work was initiated when the first author visited the University of California, San Diego. He would like to thank Professor Patrick J. Fitzsimmons for his hospitality and many helpful discussions. {We also want to thank anonymous reviewers for pointing out the article \cite{OH97} that we missed before. }




\end{document}